\numberwithin{equation}{section}
\numberwithin{figure}{section}
\theoremstyle{plain}
\newtheorem{thm}{\protect\theoremname}
  \theoremstyle{plain}
  \newtheorem{prop}[thm]{\protect\propositionname}
  \theoremstyle{remark}
  \newtheorem{rem}[thm]{\protect\remarkname}
  \theoremstyle{definition}
  \newtheorem{example}[thm]{\protect\examplename}
  \theoremstyle{plain}
  \newtheorem{lem}[thm]{\protect\lemmaname}
  \theoremstyle{definition}
  \newtheorem{defn}[thm]{\protect\definitionname}
  \theoremstyle{plain}
  \newtheorem{cor}[thm]{\protect\corollaryname}
\date{\today}
\providecommand{\corollaryname}{Corollary}
  \providecommand{\definitionname}{Definition}
  \providecommand{\examplename}{Example}
  \providecommand{\lemmaname}{Lemma}
  \providecommand{\propositionname}{Proposition}
  \providecommand{\remarkname}{Remark}
\providecommand{\theoremname}{Theorem}
\providecommand{\corollaryname}{Corollary}
  \providecommand{\definitionname}{Definition}
  \providecommand{\examplename}{Example}
  \providecommand{\lemmaname}{Lemma}
  \providecommand{\propositionname}{Proposition}
  \providecommand{\remarkname}{Remark}
\providecommand{\theoremname}{Theorem}
\newcommand{\cd}[3]{

\begin{tikzpicture}

\draw (1,0) -- (#1,0);
\foreach \p/\t/\w in {#2}
 {
  \node at (\p,-0.5) {$\scriptstyle{\w}$};
  \node at (\p,0) {\t};
 }
\foreach \s/\e/\r/\h in {#3}
 {
  \draw[thick] (\s, 0) -- (\s,\h-\r);
  \draw[thick] (\s,\h-\r) arc [radius=\r, start angle=180, end angle=90];
  \draw[thick] (\s+\r, \h) -- (\e-\r,\h);
  \draw[thick] (\e,\h-\r) arc [radius=\r, start angle=0, end angle=90];
  \draw[thick] (\e, 0) -- (\e,\h-\r);
 }
\end{tikzpicture}}
\newcommand{\cir}{\raisebox{-6pt}{$\circ$}}
\newcommand{\x}{$\times$}
\newcommand{\ve}{\raisebox{7pt}{$\vee$}}
\newcommand{\we}{\raisebox{-14pt}{$\wedge$}}
  \providecommand{\corollaryname}{Corollary}
  \providecommand{\definitionname}{Definition}
  \providecommand{\examplename}{Example}
  \providecommand{\lemmaname}{Lemma}
  \providecommand{\propositionname}{Proposition}
  \providecommand{\remarkname}{Remark}
\providecommand{\theoremname}{Theorem}
\begin{document}

\title{A Weyl-type character formula for PDC modules of $\mathfrak{gl}(m|n)$}

\author{Michael Chmutov, Crystal Hoyt, Shifra Reif}

\thanks{The third author was partially supported by NSF RTG grant DMS 0943832.}
\begin{abstract}
In 1994, Kac and Wakimoto suggested a generalization of Bernstein
and Leites character formula for basic Lie superalgebras, and the
natural question was raised: to which simple highest weight modules
does it apply? In this paper, we prove a similar formula for a large
class of finite-dimensional simple modules over the Lie superalgebra
$\mathfrak{gl}(m|n)$, which we call piecewise disconnected modules,
or PDC. The class of PDC modules naturally includes totally connected
modules and totally disconnected modules, the two families for which
similar character formulas were proven by Su and Zhang as special
cases of their general formula. This paper is part of our program
for the pursuit of elegant character formulas for Lie superalgebras. 
\end{abstract}

\maketitle

\section{Introduction}

It is well known that the theory of character formulas for simple
finite-dimensional modules over a Lie superalgebra is a nontrivial
extension of the classical case. The problem originates from the existence
of the so called \emph{atypical} roots. In the absence of these roots,
Kac proved in 1977 that the Weyl character formula generalizes in
a straightforward fashion \cite{K2,K3}. In 1980, an elegant Weyl-type
character formula was proven by Bernstein and Leites \cite{BL} for
simple highest weight modules of atypicality 1 (see Section \ref{sub:Atypical-modules.}).
Let $L\left(\lambda\right)$ be a finite-dimensional simple module
of highest weight $\lambda$ and atypical root $\beta$, then

\[
e^{\rho}R\cdot\mbox{ch }L\left(\lambda\right)=\sum_{w\in W}(-1)^{l(w)}w\left(\frac{e^{\lambda+\rho}}{1+e^{-\beta}}\right).
\]

Great efforts were invested in generalizing this formula to all finite-dimensional
modules of $\mathfrak{gl}\left(m|n\right)$. It was shown in \cite{VHKT}
that such a formula does not hold for all modules but does hold for
various families of modules, such as the covariant and contravariant
modules. In \cite{KW}, Kac and Wakimoto stated a similar formula
for the case when all of the atypical roots are simple. This was proven
by the authors in \cite{CHR} for $\mathfrak{gl}\left(m|n\right)$-
modules, and for modules over other Lie superalgebras in \cite{CK,GK}.
In \cite{SZ}, Su and Zhang gave a closed character formula for all
finite-dimensional $\mathfrak{gl}\left(m|n\right)$-modules, based
on the work of Serganova \cite{S1,S2} and Brundan \cite{B}. However,
this formula is rather intricate and difficult to apply. For modules
of atypicality $r$, the Su-Zhang formula consists of a\textcolor{black}{n
alternating sum of up to $r!\cdot2^{r-1}$ terms, each of} which resembles
the Kac-Wakimoto formula. Therefore, it is still a major goal to find
classes of modules which satisfy a simpler Weyl type character formula.

In this paper, we present a class of $\mathfrak{gl}\left(m|n\right)$-modules
for which we prove a Weyl type character formula. Our formula consists
of only one Kac-Wakimoto term. We call these modules \emph{piecewise
disconnected}, or PDC. This class of modules naturally extends the
two classes previously known to admit the Kac-Wakimoto formula: the
totally connected modules and totally disconnected modules (see \cite[Corollaries 4.13 ,4.15]{SZ}).
The PDC modules are the modules whose highest weight splits into components,
each of which resembles a totally connected module while the relation
between these components resembles a totally disconnected module (see
Definition \ref{def-pdc}).

The class of totally connected modules (also known as Kostant modules)
includes the covariant and contravariant modules and was shown by
the authors in \cite{CHR} to be precisely the same class of modules
for which the Kac-Wakimoto formula was originally conjectured in \cite[Section 3]{KW},
\cite[Conjecture 3.6]{KW2}. Totally connected modules and totally
disconnected modules were also studied over the queer Lie superalgebras
in \cite{CK2} where closed character formulas for these classes were
derived and proven.

Our main result is as follows. Let $L\left(\lambda\right)$ be a PDC
module of highest weight $\lambda$ with respect to the standard choice
of simple roots. We prove the following character formula for $L\left(\lambda\right)$:

\begin{equation}
e^{\rho}R\cdot\mbox{ch }L\left(\lambda\right)=\frac{(-1)^{|(\lambda^{\rho})^{\Uparrow}-\lambda^{\rho}|_{S_{\lambda}}}}{t_{\lambda}}\sum_{w\in W}(-1)^{l(w)}w\left(\frac{e^{\left(\lambda^{\rho}\right)^{\Uparrow}}}{\prod_{\beta\in S_{\lambda}}\left(1+e^{-\beta}\right)}\right),\label{eq:main formula}
\end{equation}
where $S_{\lambda}$ is a maximal orthogonal set of atypical roots;
the weight $\left(\lambda^{\rho}\right)^{\Uparrow}$ is obtained by
adding certain atypical roots to $\lambda+\rho$; the exponent $|(\lambda^{\rho})^{\Uparrow}-\lambda^{\rho}|_{S_{\lambda}}$
is the number of such roots added; and $t_{\lambda}$ is a positive
integer determined by the lengths of the atypical components $\lambda$
(see Definitions \ref{def-t_lambda}, \ref{def: uparrow} and \ref{def: number of betas}).

When the defect of $\mathfrak{gl}\left(m|n\right)$ is less than or
equal to $2$, (i.e. $m$ or $n$ is less than or equal to $2$),
then all modules are PDC and hence the above character formula applies.
The standard module of $\mathfrak{g}=\mathfrak{gl}\left(m|n\right)$
is totally connected, and hence PDC (see Example \ref{ex:tc standard}).
If $\mathfrak{g}=\mathfrak{gl}\left(m|n\right)$ has defect greater
than or equal to $3$, then the non-trivial simple subquotient of
the adjoint module of $\mathfrak{g}$ is not PDC (see Example \ref{ex:non-pdc adjoint}).
However, the module $\mathfrak{g}\otimes\mathfrak{g\otimes\cdots\otimes g}$
obtained by tensoring $n-1$ copies of the adjoint module contains
a simple subquotient that is PDC but is neither totally connected
nor totally disconnected (see Example \ref{ex:pdc not tc not tdc}).

Our proof of the above character formula uses Brundan's algorithm
for computing Kazhdan-Lusztig Polynomials \cite{B} and is based on
ideas from \cite{SZ}. Unlike the totally connected and totally disconnected
cases, for a general piecewise disconnected weight $\lambda$, the
weight $\left(\lambda^{\rho}\right)^{\Uparrow}$ appearing in formula
(\ref{eq:main formula}) does not correspond to a highest weight vector
for any choice of simple roots.

\section{Preliminaries}

\subsection{The general linear Lie superalgebra.}

Let $\mathfrak{g}$ denote the general linear Lie superalgebra $\mathfrak{gl}(m|n)$
over the complex field $\mathbb{C}$. As a vector space, $\mathfrak{g}$
can be identified with the endomorphism algebra $\mbox{End}(V_{\bar{0}}\oplus V_{\bar{1}})$
of a $\mathbb{Z}_{2}$-graded vector space $V_{\bar{0}}\oplus V_{\bar{1}}$
with $\mbox{dim }V_{\bar{0}}=m$ and $\mbox{dim }V_{\bar{1}}=n$.
Then $\mathfrak{g}=\mathfrak{g}_{\bar{0}}\oplus\mathfrak{g}_{\bar{1}}$,
where 
\[
\mathfrak{g}_{\bar{0}}=\mbox{End}(V_{\bar{0}})\oplus\mbox{End}(V_{\bar{1}})\quad\mathrm{and}\quad\mathfrak{g}_{\bar{1}}=\mbox{Hom}(V_{\bar{0}},V_{\bar{1}})\oplus\mbox{Hom}(V_{\bar{1}},V_{\bar{0}}).
\]
A homogeneous element $x\in\mathfrak{g}_{\bar{0}}$ has degree $0$,
denoted $\mbox{deg}(x)=0$, while $x\in\mathfrak{g}_{\bar{1}}$ has
degree $1$, denoted $\mbox{deg}(x)=1$. We define a bilinear operation
on $\mathfrak{g}$ by letting 
\[
[x,y]=xy-(-1)^{\mathrm{deg}(x)\mathrm{deg}(y)}yx
\]
on homogeneous elements and then extending linearly to all of $\mathfrak{g}$.

By fixing a basis of $V_{\bar{0}}$ and $V_{\bar{1}}$, we can realize
$\mathfrak{g}$ as the set of $(m+n)\times(m+n)$ matrices, where
\[
\mathfrak{g}_{\bar{0}}=\left\{ \left(\begin{array}{cc}
A & 0\\
0 & B
\end{array}\right)\mid A\in M_{m,m},\ B\in M_{n,n}\right\} \text{ and }\mathfrak{g}_{\bar{1}}=\left\{ \left(\begin{array}{cc}
0 & C\\
D & 0
\end{array}\right)\mid C\in M_{m,n},\ D\in M_{n,m}\right\} ,
\]
and $M_{r,s}$ denotes the set of $r\times s$ matrices.

\subsection{Root space decomposition and choice of simple roots}

The Cartan subalgebra $\mathfrak{h}$ of $\mathfrak{g}$ is the set
of diagonal matrices, and it has a natural basis 
\[
\{E_{1,1},\ldots,E_{m,m};E_{m+1,m+1},\ldots,E_{m+n,m+n}\},
\]
where $E_{ij}$ denotes the matrix whose $ij$-entry is $1$ and all
other entries are $0$. Fix the dual basis $\{\varepsilon_{1},\ldots,\varepsilon_{m};\delta_{1},\ldots,\delta_{n}\}$
for $\mathfrak{h}^{*}$. We define a bilinear form on $\mathfrak{h}^{*}$
by $(\varepsilon_{i},\varepsilon_{j})=\delta_{ij}=-(\delta_{i},\delta_{j})$
and $(\varepsilon_{i},\delta_{j})=0$.

Then $\mathfrak{g}$ has a root space decomposition $\mathfrak{g}=\mathfrak{h}\oplus\left(\bigoplus_{\alpha\in\Delta_{\bar{0}}}\mathfrak{g}_{\alpha}\right)\oplus\left(\bigoplus_{\alpha\in\Delta_{\bar{1}}}\mathfrak{g}_{\alpha}\right),$
where the set of roots of $\mathfrak{g}$ is $\Delta=\Delta_{\bar{0}}\cup\Delta_{\bar{1}}$,
with 
\[
\begin{aligned} & \Delta_{\bar{0}}=\{\varepsilon_{i}-\varepsilon_{j}\mid1\leq i\neq j\leq m\}\cup\{\delta_{k}-\delta_{l}\mid1\leq k\neq l\leq n\},\\
 & \Delta_{\bar{1}}=\{\pm(\varepsilon_{i}-\delta_{k})\mid1\leq i\leq m,\ 1\leq k\leq n\},
\end{aligned}
\]
and $\mathfrak{g}_{\varepsilon_{i}-\varepsilon_{j}}=\mathbb{C}E_{ij},\ \mathfrak{g}_{\delta_{k}-\delta_{l}}=\mathbb{C}E_{m+k,m+l},\ \mathfrak{g}_{\varepsilon_{i}-\delta_{k}}=\mathbb{C}E_{i,m+k},\ \mathfrak{g}_{\delta_{k}-\varepsilon_{i}}=\mathbb{C}E_{m+k,i}.$

The Weyl group of $\mathfrak{g}$ is $W=Sym(m)\times Sym(n)$, and
$W$ acts on $\mathfrak{h}^{*}$ by permuting the indices of the $\varepsilon$'s
and by permuting the indices of the $\delta$'s. In particular, the
even reflection $s_{\varepsilon_{i}-\varepsilon_{j}}$ interchanges
the $i$ and $j$ indices of the $\varepsilon$'s and fixes all other
indices, while $s_{\delta_{k}-\delta_{l}}$ interchanges the $k$
and $l$ indices of the $\delta$'s and fixes all other indices.

A set of simple roots $\pi\subset\Delta$ determines a decomposition
of $\Delta$ into positive and negative roots, $\Delta=\Delta^{+}\cup\Delta^{-}$.
There is a corresponding triangular decomposition of $\mathfrak{g}$
given by $\mathfrak{g}=\mathfrak{n}^{+}\oplus\mathfrak{h}\oplus\mathfrak{n}^{-}$,
where $\mathfrak{n}^{\pm}=\oplus_{\alpha\in\Delta^{\pm}}\mathfrak{g}_{\alpha}$.
Let $\Delta_{\bar{d}}^{+}=\Delta_{\bar{d}}\cap\Delta^{+}$ for $d\in\{0,1\}$.
For the rest of the paper, we fix the standard choice of simple roots
\[
\pi=\left\{ \varepsilon_{1}-\varepsilon_{2},\ldots,\varepsilon_{m-1}-\varepsilon_{m},\varepsilon_{m}-\delta_{1},\delta_{1}-\delta_{2}\ldots,\delta_{n-1}-\delta_{n}\right\} .
\]
The corresponding decomposition $\Delta=\Delta^{+}\cup\Delta^{-}$
is given by 
\begin{equation}
\Delta_{\bar{0}}^{+}=\{\varepsilon_{i}-\varepsilon_{j}\}_{1\leq i<j\leq m}\cup\{\delta_{k}-\delta_{l}\}_{1\leq k<l\leq n}\quad\text{ and}\quad\Delta_{\bar{1}}^{+}=\{\varepsilon_{i}-\delta_{k}\}_{1\leq i\leq m,\ 1\leq k\leq n}.\label{eq:set of positive roots}
\end{equation}
The standard choice of simple roots has the unique property that $W$
fixes $\Delta_{\bar{1}}^{+}$. Moreover, it contains a basis for $\Delta_{\bar{0}}^{+}$,
which we denote by $\pi_{\bar{0}}$.

Let $\rho=\frac{1}{2}\sum_{\alpha\in\Delta_{\bar{0}}^{+}}\alpha-\frac{1}{2}\sum_{\alpha\in\Delta_{\bar{1}}^{+}}\alpha.$
Then for $\alpha\in\pi$, we have $(\rho,\alpha)=(\alpha,\alpha)/2$.

We define the root lattice as $Q=\sum_{\alpha\in\pi}\mathbb{Z}\alpha$
and the positive root lattice as $Q^{+}=\sum_{\alpha\in\pi}\mathbb{N}\alpha$,
where $\mathbb{N}=\{0,1,2,\ldots\}$. A partial order is defined on
$\mathfrak{h}^{*}$ by $\mu>\nu$ when $\mu-\nu\in Q^{+}$.

\subsection{Finite dimensional modules for $\mathfrak{\mathfrak{\mathfrak{g}=gl(m|n)}}$}

For each weight $\lambda\in\mathfrak{h}^{*}$, the \emph{Verma module}
of highest weight $\lambda$ is the induced module $M(\lambda):=\mbox{Ind}_{\mathfrak{n}^{+}\oplus\mathfrak{h}}^{\mathfrak{g}}\mathbb{C}_{\lambda},$
\[
M(\lambda):=\mbox{Ind}_{\mathfrak{n}^{+}\oplus\mathfrak{h}}^{\mathfrak{g}}\mathbb{C}_{\lambda},
\]
where $\mathbb{C}_{\lambda}$ is the one-dimensional module such that
$h\in\mathfrak{h}$ acts by scalar multiplication of $\lambda(h)$
and $\mathfrak{n}^{+}$ acts trivially. The Verma module $M(\lambda)$
has a unique simple quotient, which we denote $L(\lambda)$.

For each $\lambda\in\mathfrak{h}^{*}$, let $L_{\bar{0}}(\lambda)$
denote the simple highest weight $\mathfrak{g}_{\bar{0}}$-module
with respect to $\pi_{\bar{0}}$. The {\em Kac module} of highest
weight $\lambda$ with respect to $\pi$ is the induced module 
\[
\overline{L}(\lambda):=\mbox{Ind}_{\mathfrak{g}_{\bar{0}}\oplus\mathfrak{n}_{\bar{1}}^{+}}^{\mathfrak{g}}L_{\bar{0}}(\lambda)
\]
defined by letting $\mathfrak{n}_{\bar{1}}^{+}:=\oplus_{\alpha\in\Delta_{\bar{1}}^{+}}\mathfrak{g}_{\alpha}$
act trivially on the $\mathfrak{g}_{\bar{0}}$-module $L_{\bar{0}}(\lambda)$.
The unique simple quotient of $\overline{L}(\lambda)$ is $L(\lambda)$.

Let $\mathfrak{h}_{\mathbb{R}}^{*}=\sum_{\alpha\in\pi}\mathbb{R}\alpha$
. A weight $\nu\in\mathfrak{h}_{\mathbb{R}}^{*}$ is called integral
(resp. dominant; strictly dominant) if $\langle\lambda,\alpha\rangle\in\mathbb{Z}$
(resp. $\langle\lambda,\alpha\rangle\geq0$; $\langle\lambda,\alpha\rangle>0$)
for all $\alpha\in\Delta_{\bar{0}}^{+}$, where $\langle\lambda,\alpha\rangle=\frac{2(\lambda,\alpha)}{(\alpha,\alpha)}$.

For a proof of the following proposition see for example \cite[14.1.1]{M}.
Given $\lambda\in\mathfrak{h}^{*}$, we use the following abbreviation
$\lambda^{\rho}:=\lambda+\rho$. 
\begin{prop}
\label{sub:rho^st strictly dominant}Let $\mathfrak{g}=\mathfrak{gl}(m|n)$
and $\lambda\in\mathfrak{h}^{*}$. Then, $L(\lambda)$ is a finite
dimensional $\mathfrak{g}$-module iff $L_{\bar{0}}(\lambda)$ is
finite dimensional $\mathfrak{g}_{\bar{0}}$-module iff the Kac module
$\overline{L}(\lambda)$ is finite dimensional iff $\lambda$ is a
dominant integral weight iff $\lambda^{\rho}$ is a strictly dominant
integral weight. 
\end{prop}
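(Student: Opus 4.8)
The plan is to verify the chain of equivalences by first closing a cycle of implications among the three finite-dimensionality conditions, then invoking the classical theorem of the highest weight to identify ``$L_{\bar 0}(\lambda)$ finite-dimensional'' with ``$\lambda$ dominant integral'', and finally establishing ``$\lambda$ dominant integral $\iff\lambda^\rho$ strictly dominant integral'' by a one-line root computation. The argument is essentially bookkeeping; the only place that calls for a little care is the passage between the $\mathfrak{g}$-module $L(\lambda)$ and the $\mathfrak{g}_{\bar 0}$-module $L_{\bar 0}(\lambda)$.

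For the cycle, I would argue as follows. If $L(\lambda)$ is finite-dimensional, pick a highest weight vector $v_\lambda$; it has weight $\lambda$ and is annihilated by $\mathfrak{n}_{\bar 0}^{+}\subseteq\mathfrak{n}^{+}$, so $U(\mathfrak{g}_{\bar 0})v_\lambda$ is a highest weight $\mathfrak{g}_{\bar 0}$-module of highest weight $\lambda$ which is finite-dimensional, hence has the simple module $L_{\bar 0}(\lambda)$ as a quotient; thus $L(\lambda)$ finite-dimensional $\Rightarrow L_{\bar 0}(\lambda)$ finite-dimensional. Next, writing $\mathfrak{n}_{\bar 1}^{-}=\bigoplus_{\alpha\in\Delta_{\bar 1}^{+}}\mathfrak{g}_{-\alpha}$ and using $\mathfrak{g}=\mathfrak{n}_{\bar 1}^{-}\oplus\bigl(\mathfrak{g}_{\bar 0}\oplus\mathfrak{n}_{\bar 1}^{+}\bigr)$, the PBW theorem gives a vector-space isomorphism $\overline{L}(\lambda)\cong\Lambda(\mathfrak{n}_{\bar 1}^{-})\otimes L_{\bar 0}(\lambda)$; since $\dim\mathfrak{n}_{\bar 1}^{-}=mn$, the first factor has dimension $2^{mn}$, so $L_{\bar 0}(\lambda)$ finite-dimensional $\Rightarrow\overline{L}(\lambda)$ finite-dimensional. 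Finally $\overline{L}(\lambda)$ finite-dimensional $\Rightarrow L(\lambda)$ finite-dimensional, because $L(\lambda)$ is a quotient of $\overline{L}(\lambda)$. This closes the cycle, so the first three conditions are equivalent.

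It remains to connect these to the weight conditions. The Lie algebra $\mathfrak{g}_{\bar 0}\cong\mathfrak{gl}(m)\oplus\mathfrak{gl}(n)$ is reductive, so by the classical theorem of the highest weight $L_{\bar 0}(\lambda)$ is finite-dimensional if and only if $\langle\lambda,\alpha\rangle\in\mathbb{Z}_{\ge 0}$ for every $\alpha\in\pi_{\bar 0}$, and since each simple factor of $\mathfrak{g}_{\bar 0}$ has all of its roots of one length this is the same as requiring the condition for all $\alpha\in\Delta_{\bar 0}^{+}$, i.e.\ that $\lambda$ be dominant integral. For the last equivalence it is enough to compare $\langle\lambda,\alpha\rangle$ and $\langle\lambda^\rho,\alpha\rangle$ on $\alpha\in\pi_{\bar 0}$: by the identity $(\rho,\alpha)=(\alpha,\alpha)/2$ recorded just before the proposition, valid for all $\alpha\in\pi$ and hence for $\alpha\in\pi_{\bar 0}\subseteq\pi$, we get $\langle\rho,\alpha\rangle=1$, so $\langle\lambda^\rho,\alpha\rangle=\langle\lambda,\alpha\rangle+1$; thus $\langle\lambda,\alpha\rangle\in\mathbb{Z}_{\ge 0}$ for all $\alpha\in\pi_{\bar 0}$ exactly when $\langle\lambda^\rho,\alpha\rangle\in\mathbb{Z}_{>0}$ for all such $\alpha$, which says precisely that $\lambda^\rho$ is strictly dominant integral.

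The main (and rather mild) obstacle is the super-to-even comparison in the second paragraph: confirming that finite-dimensionality of $L(\lambda)$ really does force finite-dimensionality of $L_{\bar 0}(\lambda)$, and that the Kac module $\overline{L}(\lambda)$ has the advertised vector-space shape under PBW. Everything else is either standard representation theory of reductive Lie algebras or the displayed root computation.
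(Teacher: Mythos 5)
Your proof is correct and follows the standard textbook route; the paper offers no proof of its own but simply cites Musson \cite[14.1.1]{M}, where the argument proceeds along essentially these same lines (the cycle of implications via the Kac module and PBW, the classical highest weight theorem for the reductive even part, and the shift $\langle\lambda^{\rho},\alpha\rangle=\langle\lambda,\alpha\rangle+1$ for $\alpha\in\pi_{\bar 0}$).
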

An element $\lambda\in\mathfrak{h_{\mathbb{R}}}^{*}$ is called \emph{regular}
if $(\nu,\varepsilon_{i})\neq(\nu,\varepsilon_{j})$ and $(\nu,\delta_{i})\neq(\nu,\delta_{j})$
for all $i\neq j$. An element $\nu\in\mathfrak{h}_{\mathbb{R}}^{*}$
is regular if and only if there exists $w\in W$ such that $w(\nu)$
is strictly dominant.

\subsection{Atypical modules.\label{sub:Atypical-modules.}}

Let $L(\lambda)$ be a finite dimensional $\mathfrak{g}$-module.
We call $\beta\in\Delta_{\overline{1}}$ \emph{atypical} if $\left(\lambda^{\rho},\beta\right)=\left(\beta,\beta\right)=0$.
The \textit{atypicality} of $L(\lambda)$ is the maximal number of
linearly independent roots $\beta_{1},...,\beta_{r}$ such that $\left(\beta_{i},\beta_{j}\right)=0$
and $\left(\lambda^{\rho},\beta_{i}\right)=0$ for $i,j=1,\ldots,r$.
Such a set $S_{\lambda}=\left\{ \beta_{1},\ldots,\beta_{r}\right\} $
is called a \textit{$\lambda^{\rho}$-maximal isotropic set}, and
we assume that the elements of $S_{\lambda}$ are ordered so that
$\beta_{i}=\varepsilon_{p_{i}}-\delta_{q_{i}}$ and $q_{i}<q_{i+1}$.
As in \cite{KW}, we denote the \textit{\emph{atypicality}} of $L(\lambda)$
by $\mathrm{atp}(\lambda^{\rho})=r$. The module $L(\lambda)$ is
called \textit{typical} if this set is empty, and \textit{atypical}
otherwise. For the standard choice of simple roots the set $S_{\lambda}$
is uniquely determined.

Let $P$ denote the set of integral weights, $P^{+}$ the set of dominant
integral weights, and define 
\[
\mathbb{P}^{+}=\{\mu\in P^{+}\mid(\mu_{\pi},\varepsilon_{i})\in\mathbb{Z},\ (\mu_{\pi},\delta_{j})\in\mathbb{Z}\}.
\]

\begin{rem}
\label{all integers}When studying the characters of simple finite
dimensional atypical modules, we may restrict without loss of generality
to the case that $\lambda\in\mathbb{P}^{+}$. See Remark 8 in \cite{CHR}. 
\end{rem}

\subsection{Weight diagrams and cap diagrams}

Diagrams encoding the weights of a module (among other things) were
introduced by Brundan and Stroppel in \cite{BS1} and were shown to
have numerous applications to the representation theory of $\mathfrak{gl}\left(m,n\right)$.
In particular, Brundan and Stroppel show that in some cases the corresponding
Khovanov algebra gives rise to a category equivalence with representations
of $GL(m,n)$ \cite{BS4}. Similar diagrams for $\mathfrak{osp}\left(m,2n\right)$
were used by Grusson and Serganova in \cite{GS} to give algorithmic
character formulas for basic classical Lie superalgebras. In this
paper, we restrict our attention to weight diagrams and cap diagrams
\cite{BS1}\@.

Let $\lambda\in\mathbb{P}^{+}$ and write 
\begin{equation}
\lambda^{\rho}=\sum_{i=1}^{m}a_{i}\varepsilon_{i}-\sum_{j=1}^{n}b_{j}\delta_{j}.\label{eq:lambda rho expansion}
\end{equation}
On the $\mathbb{Z}$-lattice, put $\vee$ above $t$ if $t\in\left\{ a_{i}\right\} \cap\left\{ b_{j}\right\} $,
put $\times$ above $t$ if $t\in\left\{ a_{i}\right\} \backslash\left\{ b_{j}\right\} $,
and put $\circ$ above $t$ if $t\in\left\{ b_{i}\right\} \backslash\left\{ a_{j}\right\} $.
If $t\not\in\{a_{i}\}\cup\{b_{j}\}$, then put $\wedge$. We refer
to such a diagram as a \emph{weight diagram}.

Note that each $\vee$ corresponds to some atypical root $\beta_{i}$.
We number the $\vee$'s left to right, and this is consistent our
chosen ordering for the set $S_{\lambda}$.

We obtain a \emph{cap diagram} from a weight diagram as follows. Going
from right to left and starting with the rightmost $\vee$, we draw
a cap connecting $\vee$ to first $\wedge$ to its right which is
\emph{``unmarked}'', and we say that this $\wedge$ is \emph{marked}
by $\vee$. By construction, the caps in our diagrams do not intersect. 
\begin{example}
\label{Ex: weight diagram pdc}If $\lambda^{\rho}=10\varepsilon_{1}+9\varepsilon_{2}+8\varepsilon_{3}+5\varepsilon_{4}+4\varepsilon_{5}-\delta_{1}-4\delta_{2}-6\delta_{3}-8\delta_{4}-10\delta_{5}$,
then the corresponding cap diagram $D_{\lambda}$ is 

\begin{equation}
\text{\cd{14}{1/\we/-1, 2/\we/0, 3/\cir/1, 4/\we/2, 5/\we/3, 6/\ve/4, 7/\x/5, 8/\cir/6, 9/\we/7, 10/\ve/8, 11/\x/9, 12/\ve/10, 13/\we/11, 14/\we/12}{12/13/.33/.5, 10/14/.5/.7, 6/9/.5/.7}}
\end{equation}

\end{example}

\subsection{Characters and category $\mathcal{O}$}

Let $M$ be a module from the BGG category $\mathcal{O}$ \cite[8.2.3]{M}.
Then $M$ has a weight space decomposition $M=\oplus_{\mu\in\mathfrak{h}^{*}}M_{\mu}$,
where $M_{\mu}=\{x\in M\mid h.x=\mu(h)x\text{ for all }h\in\mathfrak{h}^{*}\}$,
and the \emph{character of $M$} is by definition $\mathrm{ch}\ M=\sum_{\mu\in\mathfrak{\mathfrak{h}^{*}}}\dim M_{\mu}\ e^{\mu}.$

Denote by $\mathcal{\mathcal{E}}$ the algebra of rational functions
$\mathbb{Q}(e^{\nu},\ \nu\in\mathfrak{h}^{*})$. The group $W$ acts
on $\mathcal{E}$ by mapping $e^{\nu}$ to $e^{w(\nu)}$. \textcolor{black}{For
$\beta\in\Delta_{\bar{1}}^{+}$, we identify elements of the form
$\frac{1}{1+e^{-\beta}}$ with their expansion as geometric series
in the domain $\left|e^{-\beta}\right|<1$. }Since $\Delta_{\bar{1}}^{+}$
is fixed by $W$, expanding commutes with the action of $W$.

The \emph{Weyl denominator of $\mathfrak{g}$} is defined to be 
\[
R=\frac{{\Pi_{\alpha\in\Delta_{\bar{0}}^{+}}(1-e^{-\alpha})}}{{\Pi_{\alpha\in\Delta_{\bar{1}}^{+}}(1+e^{-\alpha})}}.
\]
Then $e^{\rho}R$ is $W$-skew-invariant, i.e. $w(e^{\rho}R)=(-1)^{l(w)}e^{\rho}R$,
and $\mathrm{ch}\ L(\lambda)$ is $W$-invariant for $\lambda\in P^{+}$.
The character of a Verma module $M(\lambda)$ with $\lambda\in\mathfrak{h}^{*}$
is $\mathrm{ch}\ M(\lambda)=e^{\lambda}R^{-1}$. The character of
the Kac module $\overline{L}(\lambda)$ with $\lambda\in P^{+}$ is
\begin{equation}
\mathrm{ch\ }\overline{L}(\lambda)=\frac{1}{e^{\rho}R}\sum_{w\in W}(-1)^{l(w)}w(e^{\lambda^{\rho}}).\label{eq:Kac character}
\end{equation}

For $X\in\mathcal{E}$, we define 
\[
\mathcal{F}_{W}(X):=\sum_{w\in W}(-1)^{l(w)}w(X).
\]
We shall use the following lemma (see for example \cite[4.1.1]{G}). 
\begin{lem}
\label{lem:not regular}If $\nu\in\mathfrak{h}_{\mathbb{R}}^{*}$
is not regular, then $\mathcal{F}_{W}\left(e^{\nu}\right)=0$. 
\end{lem}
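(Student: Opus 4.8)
\textbf{Proof proposal for Lemma~\ref{lem:not regular}.}

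The plan is to exploit the $W$-skew-invariance of $\mathcal{F}_W$ together with the existence of a nontrivial stabilizer element. First I would observe that for any $w_0\in W$ we have $w_0\bigl(\mathcal{F}_W(e^\nu)\bigr)=\sum_{w\in W}(-1)^{l(w)}(w_0w)(e^\nu)=(-1)^{l(w_0)}\mathcal{F}_W(e^\nu)$, by reindexing the sum $w\mapsto w_0^{-1}w$ and using $l(w_0w)\equiv l(w_0)+l(w)\pmod 2$. So $\mathcal{F}_W(e^\nu)$ is skew-invariant under $W$.

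Next, since $\nu\in\mathfrak{h}_{\mathbb{R}}^*$ is not regular, by the definition in the excerpt there exist indices $i\neq j$ with $(\nu,\varepsilon_i)=(\nu,\varepsilon_j)$ or $(\nu,\delta_i)=(\nu,\delta_j)$. In the first case the even reflection $s:=s_{\varepsilon_i-\varepsilon_j}$ fixes $\nu$: indeed $s(\nu)=\nu-\langle\nu,\varepsilon_i-\varepsilon_j\rangle(\varepsilon_i-\varepsilon_j)=\nu$ because $\langle\nu,\varepsilon_i-\varepsilon_j\rangle=0$. Hence $s(e^\nu)=e^{s(\nu)}=e^\nu$. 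In the second case one uses $s:=s_{\delta_i-\delta_j}$ instead, which likewise fixes $\nu$ since $(\nu,\delta_i)=(\nu,\delta_j)$ forces $\langle\nu,\delta_i-\delta_j\rangle=0$ (note $(\delta_i-\delta_j,\delta_i-\delta_j)=-2\neq0$, so the reflection coefficient is well-defined). Either way we have produced a genuine reflection $s\in W$ with $s(\nu)=\nu$.

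Now I would combine the two observations. Applying the first with $w_0=s$ gives $s\bigl(\mathcal{F}_W(e^\nu)\bigr)=(-1)^{l(s)}\mathcal{F}_W(e^\nu)=-\mathcal{F}_W(e^\nu)$, since a reflection has odd length. On the other hand, $s$ acts on $\mathcal{E}$ by $e^\mu\mapsto e^{s(\mu)}$, so rewriting the sum defining $\mathcal{F}_W(e^\nu)$ and using $s(\nu)=\nu$ we can also compute $s\bigl(\mathcal{F}_W(e^\nu)\bigr)$ directly; more cleanly, pair up $W$ into cosets $\{w,ws\}$ and note $ws(e^\nu)=w(e^{s(\nu)})=w(e^\nu)$ while $(-1)^{l(ws)}=-(-1)^{l(w)}$, so the two terms in each coset cancel, giving $\mathcal{F}_W(e^\nu)=0$ outright. (Equivalently, from $s\bigl(\mathcal{F}_W(e^\nu)\bigr)=\mathcal{F}_W(e^{s\nu})=\mathcal{F}_W(e^\nu)$ and $s\bigl(\mathcal{F}_W(e^\nu)\bigr)=-\mathcal{F}_W(e^\nu)$ we conclude $2\mathcal{F}_W(e^\nu)=0$.)

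There is no real obstacle here; the only point requiring a word of care is checking that the relevant even reflection genuinely fixes $\nu$ (not merely that $\nu$ lies on the reflecting hyperplane of an abstract root), which is immediate from the explicit action of $W=\mathrm{Sym}(m)\times\mathrm{Sym}(n)$ on $\mathfrak{h}^*$ described in the excerpt: $s_{\varepsilon_i-\varepsilon_j}$ simply swaps the $\varepsilon_i$- and $\varepsilon_j$-coordinates, and if these coordinates of $\nu$ are equal then $\nu$ is unchanged. The coset-cancellation argument is then a one-line computation, so I would present the whole proof in a few sentences.
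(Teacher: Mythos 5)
Your proof is correct. The paper does not supply its own proof of this lemma; it simply cites Gorelik \cite[4.1.1]{G}, and the argument you give is the standard one that any such reference would use: non-regularity produces an even reflection $s$ with $s(\nu)=\nu$, and then pairing $W$ into right cosets $\{w,ws\}$ forces term-by-term cancellation because $ws(e^\nu)=w(e^\nu)$ while $(-1)^{l(ws)}=-(-1)^{l(w)}$. One small remark: the identity $(-1)^{l(w_0w)}=(-1)^{l(w_0)}(-1)^{l(w)}$ that you invoke is cleanest to justify by noting $(-1)^{l(w)}=\mathrm{sgn}(w)$ is the sign character of $W=\mathrm{Sym}(m)\times\mathrm{Sym}(n)$, which is a group homomorphism; this also immediately gives $(-1)^{l(s)}=-1$ for the (not necessarily simple) reflection $s$ without appealing to length additivity. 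You were also right to check that the reflection coefficient for $s_{\delta_i-\delta_j}$ is well-defined despite the negative-definite form on the $\delta$'s, and that $s_{\varepsilon_i-\varepsilon_j}$, $s_{\delta_i-\delta_j}$ act concretely as coordinate swaps, which makes the fixed-point claim immediate.
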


\subsection{Kazhdan-Lusztig polynomials and character formulas }

Generalized Kazhdan-Lusztig polynomials $K_{\lambda,\mu}\left(q\right)$
were introduced in \cite{S1} by Serganova to give an algorithmic
character formula for finite-dimensional irreducible representations
of $\mathfrak{gl}\left(m|n\right)$. Brundan gave a new algorithm
in \cite{B} for computing the generalized Kazhdan-Lusztig polynomials
for $\mathfrak{gl}(m|n)$ which can be described in terms of paths. 

We begin by recalling Brundan's algorithm \cite{B} for computing
$K_{\lambda,\mu}\left(q\right)$ using weight diagrams. We define
a\emph{ right move} map from the set of (labeled) weight diagrams
to itself in two steps. 
\begin{defn}
Let $D_{\mu}$ be a weight diagram for $\mu\in\mathbb{P}^{+}$. The
right move $R_{i}$ is defined by exchanging $\vee_{i}$ with the
$\wedge$ that it marks, that is, we switch $\vee_{i}$ with the $\wedge$
to which it is connected to by a cap in the corresponding cap diagram
of $\mu$.\end{defn}
\begin{example}
Let $D_{\mu}$ be the weight diagram for 
\[
\mu^{\rho}=8\varepsilon_{1}+7\varepsilon_{2}+6\varepsilon_{3}+2\varepsilon_{4}+1\varepsilon_{5}-2\delta_{1}-5\delta_{2}-7\delta_{3}-8\delta_{4}-9\delta_{5},
\]
 and consider the corresponding cap diagram:

\begin{center}
\cd{14}{1/\we/-1, 2/\we/0, 3/\cir/1, 4/\raisebox{10pt}{\hspace{4.5pt}$\vee^{1}$}/2,
5/\we/3, 6/\we/4, 7/\x/5, 8/\cir/6, 9/\raisebox{10pt}{\hspace{4.5pt}$\vee^{2}$}/7,
10/\raisebox{10pt}{\hspace{4.5pt}$\vee^{3}$}/8, 11/\x/9, 12/\we/10,
13/\we/11, 14/\we/12}{10/12/.33/.5, 9/13/.5/.7, 4/5/.33/.5} 
\par\end{center}

In this case, we see that $\vee_{1}$ marks 3, $\vee_{2}$ marks 11,
and $\vee_{3}$ marks 10. Hence

\begin{eqnarray*}
R_{3}\left(D_{\mu}\right) & = & \text{\cd{14}{1/\we/-1, 2/\we/0, 3/\cir/1, 4/\raisebox{10pt}{\hspace{4.5pt}\ensuremath{\vee^{1}}}/2, 5/\we/3, 6/\we/4, 7/\x/5, 8/\cir/6, 9/\raisebox{10pt}{\hspace{4.5pt}\ensuremath{\vee^{2}}}/7, 10/\we/8, 11/\x/9, 12/\raisebox{10pt}{\hspace{4.5pt}\ensuremath{\vee^{3}}}/10, 13/\we/11, 14/\we/12}{}}\\
R_{2}\left(D_{\mu}\right) & = & \text{\cd{14}{1/\we/-1, 2/\we/0, 3/\cir/1, 4/\raisebox{10pt}{\hspace{4.5pt}\ensuremath{\vee^{1}}}/2, 5/\we/3, 6/\we/4, 7/\x/5, 8/\cir/6, 9/\we/7, 10/\raisebox{10pt}{\hspace{4.5pt}\ensuremath{\vee^{3}}}/8, 11/\x/9, 12/\we/10, 13/\raisebox{10pt}{\hspace{4.5pt}\ensuremath{\vee^{2}}}/11, 14/\we/12}{}}\\
R_{1}\left(D_{\mu}\right) & = & \text{\cd{14}{1/\we/-1, 2/\we/0, 3/\cir/1, 4/\we/2, 5/\raisebox{10pt}{\hspace{4.5pt}\ensuremath{\vee^{1}}}/3, 6/\we/4, 7/\x/5, 8/\cir/6, 9/\raisebox{10pt}{\hspace{4.5pt}\ensuremath{\vee^{2}}}/7, 10/\raisebox{10pt}{\hspace{4.5pt}\ensuremath{\vee^{3}}}/8, 11/\x/9, 12/\we/10, 13/\we/11, 14/\we/12}{}}.
\end{eqnarray*}
\end{example}
\begin{defn}
\label{definition of path}Let $\lambda,\mu\in\mathbb{P}^{+}$. Label
the $\vee$'s in the diagram $D_{\mu}$ from left to right with $1,\ldots,r$.
A \emph{right path} from $D_{\mu}$ to $D_{\lambda}$ is a sequence
of right moves $\theta=R_{i_{1}}\circ\dots\circ R_{i_{k}}$ where
$i_{1}\le\ldots\le i_{k}$ and $\theta(D_{\mu})=D_{\lambda}$. The
length of the path is $l\left(\theta\right):=k$. \end{defn}
\begin{example}
Let $D_{\mu}$ be as in the previous example. Then $R_{1}\circ R_{1}\circ R_{2}\circ R_{3}\left(D_{\mu}\right)$
is the diagram $D_{\lambda}$ of Example \ref{Ex: weight diagram pdc}.
Note that after $R_{3}$ was applied to $D_{\mu}$, $\vee_{2}$ marks
8 since the spot became empty. 
\end{example}
Define a partial order on $P$ by $\mu^{\rho}\preceq\lambda^{\rho}$
if and only if $\lambda^{\rho}$ and $\mu^{\rho}$ have the same typical
entries, $\mathrm{atp}(\lambda^{\rho})=\mathrm{\mathrm{atp}(\mu^{\rho})}$
and the $i$-th atypical entry of $\mu^{\rho}$ is less than or equal
to the $i$-th atypical entry of $\lambda^{\rho}$ 
\begin{rem}
\label{remark on C Lexi}For each $\mu,\lambda\in\mathbb{P}^{+}$,
there exists a path from $D_{\mu}$ to $D_{\lambda}$ if and only
if $\mu^{\rho}\preceq\lambda^{\rho}$ \cite{B}. 
\end{rem}
Let $P_{\lambda,\mu}$ denote the set of paths from $D_{\mu}$ to
$D_{\lambda}$. If $P_{\lambda,\mu}$ is non-empty, it contains a
unique longest path, which sends the $i$-th $\vee$ of $\mu^{\rho}$
to the location of the $i$-th $\vee$ of $\lambda^{\rho}$. We call
this path the \emph{trivial path }from $D_{\mu}$ to $D_{\lambda}$
and denote its length by $l_{\lambda,\mu}$. 
\begin{lem}[{{{Brundan, \cite[Lemma 3.42]{B}}}}]
\label{thm:Brundan mod 2}For all $\lambda,\mu\in\mathbb{P}^{+}$
and $\theta\in P_{\lambda,\mu}$, $l(\theta)\text{\ensuremath{\equiv}}l_{\lambda,\mu}\ (\text{mod 2)}$. 
\end{lem}
We are now ready to state the result of Brundan and Serganova.
\begin{thm}[Serganova \cite{S1}, Brundan \cite{B}]
\label{char formula with KL polys}For each $\lambda\in\mathbb{P}^{+}$,
\[
\mbox{ch }L\left(\lambda\right)=\sum_{\mu\in\mathbb{P}^{+}}K_{\lambda,\mu}\left(-1\right)\mbox{ch }\overline{L}\left(\mu\right).
\]
where 
\[
K_{\lambda,\mu}\left(q\right)=\sum_{\theta\in P_{\lambda,\mu}}q^{l\left(\theta\right)}
\]
and $P_{\lambda,\mu}$ is the set of paths from $D_{\mu}$ to $D_{\lambda}$
and $l(\theta)$ denotes the length of the path $\theta$. 
\end{thm}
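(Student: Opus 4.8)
This statement packages together two results of rather different flavor --- Serganova's abstract Kazhdan--Lusztig-type character formula \cite{S1,S2} and Brundan's combinatorial evaluation of the coefficients via weight diagrams \cite{B} --- so the plan is to reconstruct both. First I would install the homological framework. Restricting to the integral block $\mathcal{O}_{\chi}$ of category $\mathcal{O}$ containing $L(\lambda)$, each Kac module $\overline{L}(\mu)$ in this block has simple head $L(\mu)$, and comparing the $\mathfrak{g}_{\bar{0}}$-composition series of $\Lambda(\mathfrak{n}_{\bar{1}}^{-})\otimes L_{\bar{0}}(\mu)$ with the classical theory shows that all remaining composition factors $L(\nu)$ satisfy $\nu^{\rho}\prec\mu^{\rho}$ (in particular $\nu<\mu$). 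Hence the $\overline{L}(\mu)$ are the standard objects of a highest-weight structure on $\mathcal{O}_{\chi}$, the matrix expressing each $[L(\lambda)]$ in terms of the $[\overline{L}(\mu)]$ is unitriangular with respect to $\preceq$ with integer entries $m_{\lambda,\mu}$, and $m_{\lambda,\mu}=0$ unless $\mu^{\rho}\preceq\lambda^{\rho}$ --- exactly the support predicted by Remark~\ref{remark on C Lexi}. It then remains to prove $m_{\lambda,\mu}=K_{\lambda,\mu}(-1)$.

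Next I would build the categorification following Brundan. One realizes $[\mathcal{O}_{\chi}]$, with its basis of Kac module classes, as a weight space of the tensor space $\Lambda^{m}(V)\otimes\Lambda^{n}(V^{\ast})$ for the natural $U_{q}(\mathfrak{gl}_{\infty})$-module $V$, where weight diagrams index the monomial basis, and the translation functors obtained by tensoring with the natural and conatural $\mathfrak{g}$-modules and projecting onto blocks categorify the Chevalley generators and their divided powers. Passing to the graded lift of $\mathcal{O}_{\chi}$ afforded by Koszulity of the Brundan--Stroppel diagram algebra \cite{BS1,BS4}, the central claim is that $[L(\lambda)]$ equals the dual canonical basis element $b_{\lambda}^{\ast}$: it is bar-invariant because $L(\lambda)$ is self-dual under the contravariant duality on $\mathcal{O}$ and that duality categorifies the bar involution on tensor space, and it is unitriangular against the monomial basis with off-diagonal coefficients in $q\mathbb{Z}[q]$, by the graded refinement of the triangularity from the first step. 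By Lusztig's lemma these two properties determine $b_{\lambda}^{\ast}$ uniquely, so its monomial coefficients $K_{\lambda,\mu}(q)$ are genuine (parabolic) Kazhdan--Lusztig polynomials and $m_{\lambda,\mu}=K_{\lambda,\mu}(-1)$; Lemma~\ref{thm:Brundan mod 2} makes the parity of $l(\theta)$ constant on $P_{\lambda,\mu}$, so this specialization equals $(-1)^{l_{\lambda,\mu}}|P_{\lambda,\mu}|$ and no cancellation is hidden.

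Finally I would extract the path formula. The trivial right path of length $l_{\lambda,\mu}$ from $D_{\mu}$ to $D_{\lambda}$ exists precisely when $\mu^{\rho}\preceq\lambda^{\rho}$ and organizes the recursion used to compute $b_{\lambda}^{\ast}$: one applies divided powers to lower-weight dual canonical basis elements, bar-corrects, and subtracts lower terms; each divided-power step, read on weight diagrams, is a composite of right moves $R_{i}$, and the cap diagram is exactly the device recording which $\wedge$'s are available to be marked at each stage. An induction on $\preceq$ (equivalently on the number of caps to be resolved) then identifies the coefficient of $D_{\mu}$ in $b_{\lambda}^{\ast}$ with $\sum_{\theta\in P_{\lambda,\mu}}q^{l(\theta)}$. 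Specializing $q=-1$, substituting into the first-step expansion, and using $\mathrm{ch}\ \overline{L}(\mu)=(e^{\rho}R)^{-1}\mathcal{F}_{W}(e^{\mu^{\rho}})$ from~(\ref{eq:Kac character}) gives the theorem.

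The main obstacle is the middle step: showing $[L(\lambda)]=b_{\lambda}^{\ast}$. This carries the real content and breaks into three genuinely nontrivial pieces --- (i) producing the correct self-duality on $\mathcal{O}$ and checking that translation functors commute with it, which yields bar-invariance; (ii) establishing the $q\mathbb{Z}[q]$ triangularity in the graded setting, which rests on Koszulity of the underlying category (or, on Serganova's route, on a geometric construction of the polynomials via perverse sheaves on an associated flag supervariety); and (iii) invoking Lusztig's uniqueness lemma. By contrast the highest-weight formalism of the first step is formal, and the cap-diagram--path matching in the last step, though it needs a careful induction, is essentially bookkeeping.
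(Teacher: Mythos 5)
The paper does not prove this theorem: it is stated with the attribution ``Serganova \cite{S1}, Brundan \cite{B}'' and cited as a black box, so there is no proof in the text for your reconstruction to be compared against. What can usefully be said is how faithful your sketch is to the cited sources.

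Your outline follows Brundan's categorification route in broad strokes --- Kac modules as standard objects, translation functors categorifying Chevalley generators on $\Lambda^m V\otimes\Lambda^n V^*$ over $U_q(\mathfrak{gl}_\infty)$, identification of $[L(\lambda)]$ with the dual canonical basis, and a combinatorial extraction of the path formula --- and that is indeed the modern way this statement is presented. However, the middle step as you have set it up has a logical worry. You propose to get the $q\mathbb{Z}[q]$-triangularity by passing to the graded lift afforded by Koszulity of the Brundan--Stroppel diagram algebra \cite{BS1,BS4}, and then to invoke Lusztig's lemma. But \cite{B} is from 2003 and the Brundan--Stroppel equivalence between blocks of $\mathcal{O}$ for $\mathfrak{gl}(m|n)$ and modules over the Khovanov arc algebra \cite{BS4} is from 2012; the proof of that equivalence uses Brundan's multiplicity formula as input to pin down the endomorphism algebra of a projective generator. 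Routing the proof through \cite{BS4} therefore risks circularity, and in any case it is not what Brundan did. Brundan's argument in \cite{B} stays in the ungraded Grothendieck group at $q=1$, where the bar involution is trivial and Lusztig's lemma is unavailable; he instead proceeds by an inductive translation-functor argument (combined with BGG reciprocity for the Kac filtration multiplicities of projectives), and the combinatorial part --- that the canonical basis coefficients of the tensor space are given by counting right paths, independent of any category theory --- is carried out separately. Serganova's proof \cite{S1} is different again and is geometric. Your step one (highest-weight structure, unitriangularity with respect to $\preceq$, matching the support with Remark \ref{remark on C Lexi}) and step three (reading the divided-power recursion off cap diagrams, with Lemma \ref{thm:Brundan mod 2} ensuring the parity is well-defined so that $K_{\lambda,\mu}(-1)=(-1)^{l_{\lambda,\mu}}|P_{\lambda,\mu}|$) are sound as stated. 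If you want to keep the graded-lift formulation of step two, you should derive the grading from a source independent of \cite{B}, for instance via super duality in the sense of Cheng--Lam--Wang (the paper's reference \cite{CWZ} is the precursor), rather than from \cite{BS4}.
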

The following is a corollary of Theorem \ref{char formula with KL polys},
Lemma \ref{thm:Brundan mod 2} and Equation (\ref{eq:Kac character}). 
\begin{cor}
\label{cor:character with paths}Let $\lambda\in\mathbb{P}^{+}$,
and let $P_{\lambda}=\left\{ \mu\in\mathbb{P}^{+}\mid P_{\lambda,\mu}\text{ is non-empty}\right\} $.
Then 
\begin{equation}
e^{\rho}R\cdot\mbox{ch }L\left(\lambda\right)=\sum_{\mu\in P_{\lambda}}d_{\lambda,\mu}\cdot\left(-1\right)^{l_{\lambda,\mu}}\mathcal{F}_{W}\left(e^{\mu^{\rho}}\right)\label{eq:char formula with paths}
\end{equation}
where $d_{\lambda,\mu}$ is the number of paths from $D_{\mu}$ to
$D_{\lambda}$. 
\end{cor}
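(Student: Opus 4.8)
The statement to be proven is Corollary~\ref{cor:character with paths}, so the plan is essentially to assemble three ingredients already stated: Theorem~\ref{char formula with KL polys} (the Kazhdan--Lusztig character formula), Lemma~\ref{thm:Brundan mod 2} (all paths from $D_\mu$ to $D_\lambda$ have length congruent mod~$2$ to the trivial path), and Equation~(\ref{eq:Kac character}) for the character of the Kac module. First I would multiply both sides of the identity in Theorem~\ref{char formula with KL polys} by $e^\rho R$ and substitute $\mathrm{ch}\,\overline{L}(\mu)=\frac{1}{e^\rho R}\mathcal{F}_W(e^{\mu^\rho})$ from~(\ref{eq:Kac character}), which immediately cancels the $e^\rho R$ factor on the Kac-module side and yields
\[
e^\rho R\cdot\mathrm{ch}\,L(\lambda)=\sum_{\mu\in\mathbb{P}^+}K_{\lambda,\mu}(-1)\,\mathcal{F}_W(e^{\mu^\rho}).
\]
Then I would note that $K_{\lambda,\mu}(-1)=\sum_{\theta\in P_{\lambda,\mu}}(-1)^{l(\theta)}$, and that the sum over $\mu$ may be restricted to $\mu\in P_\lambda$ since $K_{\lambda,\mu}(q)=0$ whenever $P_{\lambda,\mu}$ is empty (equivalently, by Remark~\ref{remark on C Lexi}, whenever $\mu^\rho\not\preceq\lambda^\rho$).

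The one genuine point requiring Lemma~\ref{thm:Brundan mod 2} is the evaluation of $K_{\lambda,\mu}(-1)$ for $\mu\in P_\lambda$. Since every $\theta\in P_{\lambda,\mu}$ satisfies $l(\theta)\equiv l_{\lambda,\mu}\pmod 2$, each summand $(-1)^{l(\theta)}$ equals $(-1)^{l_{\lambda,\mu}}$, so
\[
K_{\lambda,\mu}(-1)=\sum_{\theta\in P_{\lambda,\mu}}(-1)^{l(\theta)}=(-1)^{l_{\lambda,\mu}}\,\bigl|P_{\lambda,\mu}\bigr|=(-1)^{l_{\lambda,\mu}}\,d_{\lambda,\mu},
\]
using the definition of $d_{\lambda,\mu}$ as the number of paths from $D_\mu$ to $D_\lambda$. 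Substituting this back gives exactly~(\ref{eq:char formula with paths}).

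I do not expect any real obstacle here, since all the hard work is contained in the cited results of Serganova and Brundan; the corollary is a bookkeeping consequence. The only thing to be slightly careful about is that the geometric-series expansion convention for $\frac{1}{1+e^{-\beta}}$ and the $W$-action are compatible (noted in the excerpt, since $\Delta_{\bar 1}^+$ is $W$-fixed), so that applying $\mathcal{F}_W$ term-by-term to~(\ref{eq:Kac character}) is legitimate and the manipulations above take place inside $\mathcal{E}$ without convergence issues. It is also worth remarking that $\mathcal{F}_W(e^{\mu^\rho})$ is automatically well-defined and nonzero only when $\mu^\rho$ is regular, which holds for $\mu\in\mathbb{P}^+$ by Proposition~\ref{sub:rho^st strictly dominant}, so no terms are silently lost.
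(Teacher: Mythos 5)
Your proposal is correct and matches the paper's intent exactly: the paper simply states this as a corollary of Theorem~\ref{char formula with KL polys}, Lemma~\ref{thm:Brundan mod 2}, and Equation~(\ref{eq:Kac character}) without writing out the details, and your argument (multiply by $e^\rho R$, substitute the Kac module character, evaluate $K_{\lambda,\mu}(-1)=(-1)^{l_{\lambda,\mu}}d_{\lambda,\mu}$ via the mod-2 lemma, and drop the $\mu$ with $P_{\lambda,\mu}=\emptyset$) is precisely the bookkeeping the authors leave implicit.
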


\section{Piecewise disconnected weights}

\subsection{\label{sub:Piecewise-disconnected-weights}Piecewise disconnected
weights}

We will see that some simple highest weight modules have particularly
nice character formulas. In this section we characterize their highest
weights.

The following definition is equivalent to that of \cite[Section 3.7]{SZ}. 
\begin{defn}
\label{def-tc}A weight $\lambda\in\mathbb{P}^{+}$ is called \emph{totally
connected} if in the weight diagram $D_{\lambda}$ between any two
$\vee$'s there is no $\wedge$. \label{def-tdc}A weight $\lambda\in\mathbb{P}^{+}$
is called\emph{ totally disconnected} if the diagram $D_{\lambda}$
contains at least one $\wedge$ between any two $\vee$'s.\end{defn}
\begin{rem}
\textcolor{black}{The cap diagram for a totally connected weight looks
like a rainbow. In particular, a weight is totally connected if and
only if its cap diagram satisfies the property that if a cap $A$
is below cap $B$, then all the caps that are below $B$ are either
above or below $A$. Whereas, a weight is totally disconnected if
and only if its cap diagram satisfies the property that no cap is
below another cap.} 
\end{rem}

\begin{rem}
A weight $\lambda\in\mathbb{P}^{+}$ is totally connected if and only
if for every $\mu\in\mathbb{P}^{+}$ the only possible path from $D_{\mu}$
to $D_{\lambda}$ is the trivial path, whereas it is totally disconnected
if and only if there exists $\mu\in\mathbb{P}^{+}$ with $r!$ paths
from $D_{\mu}$ to $D_{\lambda}$, where $r=\mathrm{atp}(\lambda^{\rho})$. \end{rem}
\begin{defn}
\label{definition of T}Let $\lambda\in\mathbb{P}^{+}$. We call a
nonempty continuous subsection of the weight diagram $D_{\lambda}$
an \emph{atypical component }if it contains an $\vee$, does not contain
any $\wedge$'s and is maximal with this property. If $\vee_{j}$
and $\vee_{k}$ belong to the same atypical component then we\emph{
write} $j\sim k$. \label{def-t_lambda}Enumerate the atypical components
of $D_{\lambda}$ left to right $T_{1},\ldots,T_{N}$, and let $t_{i}$
be the number of $\vee$'s contained in $T_{i}$ for $i=1,\ldots,N$.
We define $t_{\lambda}=t_{1}!t_{2}!\cdots t_{N}!$. 
\end{defn}

\begin{defn}
\label{def-pdc}We call a weight $\lambda\in\mathbb{P}^{+}$ and the
corresponding weight diagram $D_{\lambda}$ \emph{piecewise disconnected
(or PDC)} if $t_{i}\leq s_{i}$, where $s_{i}$ is the number of $\wedge$'s
between $T_{i}$ and $T_{i+1}$, for $i=1,\ldots,N-1$. \end{defn}
\begin{rem}
A weight is piecewise disconnected if and only if its cap diagram
satisfies the property that whenever two caps $A$ and $B$ are both
below the same cap $C,$ then either $A$ is below $B$, or $B$ is
below $A$. 
\end{rem}

\begin{rem}
\textcolor{black}{In the language of \cite[Section 14]{HW}, a weight
is piecewise disconnected if and only if the forest of $\lambda$
is a disjoint union of lines. In this case, $t_{\lambda}$ is equal
to the forest factorial. } \end{rem}
\begin{example}
The weight diagram $D_{\lambda}$ in Example \ref{Ex: weight diagram pdc}
is piecewise disconnected, but is neither totally connected nor totally
disconnected. It has two atypical components, namely, $T_{1}=\{4,5,6\},\ T_{2}=\{8,9,10\}$,
and $t_{1}=1$, $t_{2}=2$, $s_{1}=1$. 
\end{example}
The following lemma is a corollary of the definition. 
\begin{lem}
Any weight of atypicality $1$ or $2$ is either totally connected
or totally disconnected, and hence is piecewise disconnected. \end{lem}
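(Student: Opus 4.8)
The statement is that any weight $\lambda\in\mathbb{P}^+$ with $\mathrm{atp}(\lambda^\rho)\le 2$ is either totally connected or totally disconnected. The plan is to unwind the definitions directly. If $\mathrm{atp}(\lambda^\rho)=r\le 1$, the diagram $D_\lambda$ has at most one $\vee$, so the conditions ``between any two $\vee$'s there is no $\wedge$'' (total connectedness) and ``between any two $\vee$'s there is at least one $\wedge$'' (total disconnectedness) are both vacuously satisfied. So the only case requiring an argument is $r=2$, where $D_\lambda$ has exactly two $\vee$'s, say at positions $p<q$ on the $\mathbb{Z}$-lattice.

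For $r=2$ the argument is a dichotomy on whether there is a $\wedge$ strictly between position $p$ and position $q$. If there is no such $\wedge$, then by Definition~\ref{def-tc} the weight is totally connected. If there is at least one such $\wedge$, then by Definition~\ref{def-tdc} the weight is totally disconnected, since with only two $\vee$'s there is only one pair of $\vee$'s to check. In either case $\lambda$ satisfies one of the two definitions. Finally, by Definition~\ref{def-pdc} (or by the remark preceding it), every totally connected and every totally disconnected weight is piecewise disconnected: for a totally connected weight each atypical component $T_i$ is a single point, so $N=1$ and there is nothing to check; for a totally disconnected weight each $T_i$ consists of a single $\vee$, so $t_i=1$, while $s_i\ge 1$ by the assumption that a $\wedge$ separates consecutive $\vee$'s, giving $t_i\le s_i$.

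There is essentially no obstacle here — the content is purely that ``between any two $\vee$'s'' imposes no constraint when there are fewer than two $\vee$'s, and exactly one (binary) constraint when there are exactly two. The only mild care needed is in the $r=2$ totally-connected case: to conclude PDC one should note that being totally connected forces the single atypical component to absorb both $\vee$'s (there is no $\wedge$ between them), so $N=1$ and the defining inequalities of Definition~\ref{def-pdc} are an empty family. This matches the general observation (stated in the remarks after Definition~\ref{def-tc} and before Definition~\ref{def-pdc}) that total connectedness and total disconnectedness are the two extreme cases of the PDC condition.
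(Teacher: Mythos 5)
Your proof is correct in substance and fills in exactly the routine case-checking that the paper leaves implicit (the paper simply declares this ``a corollary of the definition'' and gives no argument). The dichotomy for $r=2$ on whether a $\wedge$ separates the two $\vee$'s, plus the vacuity for $r\le 1$, is the right and essentially only way to argue part one; the verification that TC and TDC weights are PDC (using $N=1$ resp.\ $t_i=1\le s_i$) is likewise what the paper has in mind, as confirmed by the Remark following the main theorem.

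One small slip in the exposition: you write ``for a totally connected weight each atypical component $T_i$ is a single point, so $N=1$.'' For a totally connected weight of atypicality $r\ge 2$ the (unique) atypical component is \emph{not} a single point --- it contains all $r$ of the $\vee$'s, since total connectedness means no $\wedge$ intervenes between them. Moreover, even if each component \emph{were} a single point, that would not by itself force $N=1$. You then give the correct justification in the final paragraph (total connectedness forces the $\vee$'s into a single component, so $N=1$ and the family of inequalities in Definition~\ref{def-pdc} is empty), so the argument stands; only the earlier phrasing should be corrected.
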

\begin{example}
\label{ex:tc standard}The highest weight $\lambda=\varepsilon_{1}$
of the standard module $V$ of $\mathfrak{gl}(m|n)$ is totally connected,
as is the highest weight $\lambda=-\delta_{n}$ of the dual module
$V^{*}$. For example, for $\mathfrak{gl}(3|3)$ we have that $\lambda^{\rho}=4\varepsilon_{1}+2\varepsilon_{2}+1\varepsilon_{3}-1\delta_{1}-2\delta_{2}-3\delta_{3}$
and the cap diagram $D_{\lambda}$ is

\[
\ldots\raisebox{-.7cm}{\text{\cd{9}{1/\we/-1, 2/\we/0, 3/\ve/1, 4/\ve/2, 5/\cir/3, 6/\x/4, 7/\we/5, 8/\we/6, 9/\we/7}{4/7/.25/.5,3/8/.33/.7}}}\ldots
\]

\end{example}

\begin{example}
The highest weight $\lambda$ of the non-trivial subquotient of the
adjoint module of $\mathfrak{gl}(2|2)$ is totally disconnected. Indeed,
$\lambda^{\rho}=3\varepsilon_{1}+1\varepsilon_{2}-1\delta_{1}-3\delta_{2}$
and the cap diagram $D_{\lambda}$ is

\[
\ldots\raisebox{-.7cm}{\text{\cd{9}{1/\we/-1, 2/\we/0, 3/\ve/1, 4/\we/2, 5/\ve/3, 6/\we/4, 7/\we/5, 8/\we/6, 9/\we/7}{5/6/.25/.5, 3/4/.25/.5}}}\ldots
\]

\end{example}

\begin{example}
\label{ex:non-pdc adjoint}For $n\geq3$, the highest weight $\lambda=\varepsilon_{1}-\delta_{n}$
of the non-trivial subquotient of the adjoint module of $\mathfrak{gl}(n|n)$
is not piecewise disconnected. For example, for $\mathfrak{gl}(3|3)$
we have that $\lambda^{\rho}=4\varepsilon_{1}+2\varepsilon_{2}+1\varepsilon_{3}-1\delta_{1}-2\delta_{2}-4\delta_{3}$
and the corresponding cap diagram $D_{\lambda}$ is 
\[
\ldots\raisebox{-.7cm}{\text{\cd{9}{1/\we/-1, 2/\we/0, 3/\ve/1, 4/\ve/2, 5/\we/3, 6/\ve/4, 7/\we/5, 8/\we/6, 9/\we/7}{6/7/.25/.5,4/5/.25/.5,3/8/.33/.7}}}\ldots
\]

If $\mathfrak{g}=\mathfrak{gl}(n|n)$, then the adjoint module of
$\mathfrak{g}$ has a unique non-trivial simple subquotient $L(\nu)$.
The highest weight $\nu=\varepsilon_{1}-\delta_{n}$ is not piecewise
disconnected. 
\end{example}

\begin{example}
\label{ex:pdc not tc not tdc}If $\mathfrak{g}=\mathfrak{gl}(n|n)$
then the module $\mathfrak{g}\otimes\mathfrak{g\otimes\cdots\otimes g}$
obtained by tensoring $n-1$ copies of the adjoint module has a maximal
weight $\mu=(n-1)\nu=(n-1)\varepsilon_{1}-(n-1)\delta_{n}$ and a
simple subquotient $L(\mu)$. The weight $\nu$ is piecewise disconnected
but is neither totally connected nor totally disconnected. In particular
if $\mathfrak{g}=\mathfrak{gl}(3|3)$, then $L(\nu)$ with $\nu=2\varepsilon_{1}-2\delta_{n}$
is a simple subquotient of the module $\mathfrak{g}\otimes\mathfrak{g}$.
So $\nu^{\rho}=5\varepsilon_{1}+2\varepsilon_{2}+1\varepsilon_{3}-1\delta_{1}-2\delta_{2}-5\delta_{3}$
and the cap diagram for $D_{\nu}$ is

\[
\ldots\raisebox{-.7cm}{\text{\cd{9}{1/\we/-1, 2/\we/0, 3/\ve/1, 4/\ve/2, 5/\we/3, 6/\we/4, 7/\ve/5, 8/\we/6, 9/\we/7}{4/5/.25/.5,3/6/.33/.7,7/8/.25/.5}}}\ldots
\]

\end{example}

\subsection{Definition of $(\lambda^{\rho})^{\Uparrow}$ }

\textcolor{black}{The integral weight $(\lambda^{\rho})^{\Uparrow}$
is a modification of $\lambda^{\rho}$ which shall replace $\lambda^{\rho}$
in the character formula (Theorem \ref{thm:main theorem}). L}et $\lambda\in\mathbb{P}^{+}$
and write $\lambda^{\rho}$ as in (\ref{eq:lambda rho expansion}).
We refer to the coefficient $a_{i}$ (resp. $b_{j}$) as the\textit{
$\varepsilon_{i}$-entry} (resp. $\delta_{j}$-entry). If $\pm(\varepsilon_{k}-\delta_{l})\in S_{\lambda}$,
then we call the $\varepsilon_{k}$ and $\delta_{l}$ entries \emph{atypical}.
Otherwise, an entry is called \textit{typical}. 
\begin{defn}
\label{def: uparrow}If $\lambda\in\mathbb{P}^{+}$ is piecewise disconnected,
we denote by $(\lambda^{\rho})^{\Uparrow}$ the element obtained from
$\lambda^{\rho}$ by replacing each atypical entry with the maximal
atypical entry in the atypical component to which it belongs. \end{defn}
\begin{rem}
If $\lambda\in\mathbb{P}^{+}$ is totally disconnected then $(\lambda^{\rho})^{\Uparrow}=\lambda^{\rho}$,
whereas if $\lambda\in\mathbb{P}^{+}$ is totally connected then all
the atypical entries of $(\lambda^{\rho})^{\Uparrow}$ equal the maximal
atypical entry of $\lambda^{\rho}$.\end{rem}
\begin{example}
If $\lambda^{\rho}$ is as in Example \ref{Ex: weight diagram pdc},
then 
\begin{eqnarray*}
\left(\lambda^{\rho}\right)^{\Uparrow} & = & 10\varepsilon_{1}+9\varepsilon_{2}+10\varepsilon_{3}+5\varepsilon_{4}+4\varepsilon_{5}-\delta_{1}-4\delta_{2}-6\delta_{3}-10\delta_{4}-10\delta_{5}.
\end{eqnarray*}
\end{example}
\begin{defn}
\label{def: number of betas}If $\nu\in\mathfrak{h}^{*}$ can be written
as $\nu=\sum_{\alpha\in S_{\lambda}}k_{\alpha}\alpha$, then we define
\[
\left|\nu\right|_{S_{\lambda}}:=\sum_{\alpha\in S_{\lambda}}k_{\alpha}.
\]
Observe that $|(\lambda^{\rho})^{\Uparrow}-\lambda^{\rho}|_{S_{\lambda}}$
is a non-negative integer. 
\end{defn}

\section{main theorem}

The main theorem of this paper is as follows. 
\begin{thm}
\label{thm:main theorem}Let $\lambda\in\mathbb{P}^{+}$ be a piecewise
disconnected weight. Then 
\begin{equation}
e^{\rho}R\cdot\mbox{ch }L\left(\lambda\right)=\frac{(-1)^{|(\lambda^{\rho})^{\Uparrow}-\lambda^{\rho}|_{S_{\lambda}}}}{t_{\lambda}}\sum_{w\in W}(-1)^{l(w)}w\left(\frac{e^{(\lambda^{\rho})^{\Uparrow}}}{\prod_{\beta\in S_{\lambda}}\left(1+e^{-\beta}\right)}\right),\label{eq:main theorem}
\end{equation}
where $t_{\lambda}=t_{1}!t_{2}!\cdots t_{N}!$ (see Definition \ref{def-t_lambda})
and $S_{\lambda}$ is the (unique) $\lambda^{\rho}$-maximal isotropic
set of roots. \end{thm}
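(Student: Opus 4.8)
The strategy is to start from Corollary~\ref{cor:character with paths}, which expresses $e^{\rho}R\cdot\mathrm{ch}\,L(\lambda)$ as $\sum_{\mu\in P_{\lambda}}d_{\lambda,\mu}(-1)^{l_{\lambda,\mu}}\mathcal{F}_{W}(e^{\mu^{\rho}})$, and to show that this alternating sum of Kac characters collapses, up to the sign and the factor $1/t_{\lambda}$, into the single Kac--Wakimoto term on the right-hand side of \eqref{eq:main theorem}. First I would expand the target expression: writing $\frac{1}{\prod_{\beta\in S_{\lambda}}(1+e^{-\beta})}=\prod_{\beta\in S_{\lambda}}\sum_{k_{\beta}\geq0}(-1)^{k_{\beta}}e^{-k_{\beta}\beta}$ (using that $\Delta_{\bar1}^{+}$ is $W$-fixed so this geometric expansion commutes with $\mathcal{F}_{W}$), the right-hand side becomes a $\mathbb{Z}[[\cdot]]$-combination $\frac{1}{t_{\lambda}}\sum_{k\in\mathbb{N}^{S_{\lambda}}}(-1)^{|(\lambda^{\rho})^{\Uparrow}-\lambda^{\rho}|_{S_{\lambda}}+|k|}\,\mathcal{F}_{W}\!\bigl(e^{(\lambda^{\rho})^{\Uparrow}-\sum_{\beta}k_{\beta}\beta}\bigr)$. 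By Lemma~\ref{lem:not regular}, only the terms where $(\lambda^{\rho})^{\Uparrow}-\sum_{\beta}k_{\beta}\beta$ is regular survive, and each such regular weight can be moved by a unique $w\in W$ into the dominant chamber, contributing $(-1)^{l(w)}\mathcal{F}_{W}(e^{\nu})$ for the strictly dominant representative $\nu=\mu^{\rho}$ of some $\mu\in\mathbb{P}^{+}$. So both sides are $\mathbb{Z}$-combinations of the linearly independent symbols $\mathcal{F}_{W}(e^{\mu^{\rho}})$ over strictly dominant $\mu^{\rho}$, and it suffices to match coefficients.

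The combinatorial heart is therefore: for each $\mu\in P_{\lambda}$, show that the signed count of tuples $k\in\mathbb{N}^{S_{\lambda}}$ for which $(\lambda^{\rho})^{\Uparrow}-\sum_{\beta}k_{\beta}\beta$ becomes $\mu^{\rho}$ after applying the (unique) $w\in W$ that dominantizes it, weighted by $(-1)^{|(\lambda^{\rho})^{\Uparrow}-\lambda^{\rho}|_{S_{\lambda}}+|k|}(-1)^{l(w)}$, equals $t_{\lambda}\cdot d_{\lambda,\mu}(-1)^{l_{\lambda,\mu}}$. I would reinterpret both sides in terms of weight/cap diagrams. On the diagram side, subtracting $\beta_{i}=\varepsilon_{p_{i}}-\delta_{q_{i}}$ from a weight corresponds to moving the $i$-th $\vee$ one step to the left (in the $\varepsilon$-entry) and the paired $\circ$-partner; iterating and then dominantizing by $w$ corresponds to re-sorting the $\varepsilon$'s and $\delta$'s. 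The key observation is that the operation "replace each atypical entry by the max in its atypical component, then subtract an arbitrary nonnegative combination of the $\beta_{i}$'s and sort" recovers exactly the diagrams $D_{\mu}$ reachable from $D_{\lambda}$ by right paths: within a single atypical component $T_{i}$ (which has no $\wedge$'s), the $\vee$'s can be freely permuted among the available slots, and each of the $t_{i}!$ internal orderings of a fixed target multiset of $\vee$-positions is realized, with signs that combine to give a net factor matching $t_{i}!$ against a single path; across components, the PDC condition $t_{i}\leq s_{i}$ guarantees there is always enough room ($\wedge$'s) to the right so that moves in different components do not obstruct one another, which is precisely what makes the surviving $\mu$'s exactly $P_{\lambda}$ and makes the multiplicities multiply as $d_{\lambda,\mu}=\prod(\text{per-component path counts})$.

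I expect the main obstacle to be the sign bookkeeping: one must show that $(-1)^{|(\lambda^{\rho})^{\Uparrow}-\lambda^{\rho}|_{S_{\lambda}}+|k|+l(w)}$ is constant (equal to $(-1)^{l_{\lambda,\mu}}$) over all tuples $k$ and all internal re-orderings contributing to a fixed $\mu$, so that the $t_{\lambda}$ contributions add rather than cancel. This is where Brundan's parity lemma (Lemma~\ref{thm:Brundan mod 2}) enters: translating a right move $R_{i}$ into the language of "subtract a multiple of $\beta$ and re-sort" produces a well-defined change in $|k|+l(w)$ modulo $2$ that equals the change in path length modulo $2$, and Lemma~\ref{thm:Brundan mod 2} says all paths from $D_{\mu}$ to $D_{\lambda}$ have length $\equiv l_{\lambda,\mu}$, forcing the signs to agree. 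The subtlety flagged in the introduction --- that $(\lambda^{\rho})^{\Uparrow}$ need not be a highest weight for any Borel --- means I cannot simply invoke a known Kac-module decomposition; instead I must carry out the regularity/dominantization analysis directly, and check that the "extra" $\vee$-positions introduced by $\Uparrow$ (the ones counted by $|(\lambda^{\rho})^{\Uparrow}-\lambda^{\rho}|_{S_{\lambda}}$) are exactly absorbed when re-sorting, contributing the stated global sign. Once the coefficient identity $\sum_{k}(-1)^{\cdots}=t_{\lambda}d_{\lambda,\mu}(-1)^{l_{\lambda,\mu}}$ is established for every $\mu$, dividing by $t_{\lambda}$ and comparing with \eqref{eq:char formula with paths} finishes the proof.
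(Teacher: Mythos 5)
Your high-level plan is essentially the paper's, run in the opposite direction: expand the right-hand side geometrically, discard non-regular terms by Lemma~\ref{lem:not regular}, dominantize each surviving weight, and match coefficients of $\mathcal{F}_{W}(e^{\mu^{\rho}})$ against Corollary~\ref{cor:character with paths}. But there are two substantive gaps where your sketch glosses over the actual work.

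First, the coefficient identity you isolate as "the combinatorial heart"---that the signed count of tuples $k$ dominantizing to $\mu^{\rho}$ equals $t_{\lambda}\,d_{\lambda,\mu}(-1)^{l_{\lambda,\mu}}$---is precisely what the paper proves as Proposition~\ref{prop:use pdc} via the bijection $W_{r}(t_{\lambda})\vee B_{\lambda,\mu}\cong\widetilde{B}_{\lambda,\mu}$, which in turn rests on the explicit description of $\mathrm{Im}\,\Theta_{\lambda,\mu}$ (Lemmas~\ref{lem:pdc} and~\ref{lem: image}). You assert the conclusion ("the $t_{i}!$ internal orderings... are realized") without proving it; in particular you do not verify that the PDC condition lets you pass freely between the bound $\preceq\lambda^{\rho}$ and $\preceq(\lambda^{\rho})^{\Uparrow}$, which is exactly Lemma~\ref{lem: image} and is the place where piecewise disconnectedness actually enters. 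Your side remark that $d_{\lambda,\mu}$ factors into per-component path counts is also not how the count works: $d_{\lambda,\mu}$ is the number of $\sigma\in Sym(r)$ satisfying $\sigma(\mu^{\rho})\preceq\lambda^{\rho}$ together with the within-component ordering condition; the $t_{\lambda}$ enters not as a product of path counts but as $|W_{r}(t_{\lambda})|$, and the identity is $t_{\lambda}d_{\lambda,\mu}=c_{\lambda,\overline{\mu}}$.

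Second, your proposal to use Brundan's parity lemma to settle the sign bookkeeping does not reach the real issue. Lemma~\ref{thm:Brundan mod 2} is what makes $(-1)^{l_{\lambda,\mu}}$ well-defined in Corollary~\ref{cor:character with paths}, but once you are past that corollary the extra contributions you must control come from the orbit of $\overline{\mu}$ under $W_{r}$ (and especially $W_{r}(t_{\lambda})$), and these do not correspond to right paths from $D_{\mu}$ to $D_{\lambda}$, so Brundan's lemma says nothing about them. The paper's mechanism is different and simpler: every element of $W_{r}$ is a product of paired transpositions $s_{\varepsilon_{i}-\varepsilon_{j}}s_{\delta_{i'}-\delta_{j'}}$, hence even, so (4.3) holds with no sign; and the exponent $|\lambda^{\rho}-\overline{\mu}|_{S_{\lambda}}=l_{\lambda,\mu}+l(w')$ is computed directly in Lemma~\ref{lem:Lexi sum} by interpreting each right move as skipping over $\times$'s and $\circ$'s, giving a reduced word for $w'$. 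Without the evenness of $W_{r}$ (which you never invoke) your sign argument is incomplete.
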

\begin{rem}
A totally connected weight $\lambda$ is piecewise disconnected with
$N=1$ and $t_{\lambda}=r!$. A totally disconnected weight $\lambda$
is piecewise disconnected with $N=r$ and $t_{\lambda}=1$. Here $r=\mathrm{atp}(\lambda^{\rho})$. 
\end{rem}

\subsection{\textcolor{black}{A map from the set of paths to $Sym(r)$}}

\textcolor{black}{One of the ideas of the proof is to translate the
character formula given in terms of paths in (\ref{eq:char formula with paths})
to a formula in terms of the Weyl group. For each $\lambda,\mu\in\mathbb{P}^{+}$,
we give an injective map from the set of paths $P_{\lambda,\mu}$
to $Sym(r)$, where $r$ is the atypicality of $\lambda$. We shall
later embed $Sym(r)$ in $W.$ We describe the image of this map when
$\lambda$ is piecewise disconnected. The image of such a map for
general $\lambda$ was described by Su and Zhang in \cite[Section 3.8]{SZ}.}

For $\lambda,\mu\in\mathbb{P}^{+}$, number the $\vee$'s of $D_{\mu}$
left to right $\vee_{1},\dots,\vee_{r}$ and number the $\check{\vee}$'s
of $D_{\lambda}$ left to right $\check{\vee}_{1},\ldots,\check{\vee}_{r}$.
Then a path $\theta\in P_{\lambda,\mu}$ determines uniquely an element
of $Sym(r)$ given by the ordering 
\[
\vee_{k}\mapsto\check{\vee}_{\sigma_{\theta}(k)}.
\]
In this way, we define the map $\Theta_{\lambda,\mu}:P_{\lambda,\mu}\rightarrow Sym(r)$.
The map $\Theta_{\lambda,\mu}$ is injective, since a path is determined
by this ordering. The image of the trivial path is the identity element
of $Sym(r)$. 
\begin{example}
\label{ex: non-pdc paths to permutations}Let $D_{\lambda}$ be as
in Example \ref{ex:non-pdc adjoint} and let $D_{\mu}$ be

\begin{equation}
\ldots\raisebox{-.7cm}{\text{\cd{9}{1/\we/-1, 2/\we/0, 3/\raisebox{10pt}{\hspace{4.5pt}\ensuremath{\vee^{1}}}/1, 4/\raisebox{10pt}{\hspace{4.5pt}\ensuremath{\vee^{2}}}/2, 5/\raisebox{10pt}{\hspace{4.5pt}\ensuremath{\vee^{3}}}/3, 6/\we/4, 7/\we/5, 8/\we/6, 9/\we/7}{}}}\ldots
\end{equation}

There are two paths from $D_{\mu}$ to $D_{\lambda}$, namely, the
trivial path and the path $R_{1}R_{1}R_{1}R_{2}R_{2}R_{2}R_{3}R_{3}$
which can be computed as follows. 
\begin{eqnarray*}
R_{3}R_{3}\left(D_{\mu}\right) & = & \ldots\raisebox{-.7cm}{\text{\cd{9}{1/\we/-1, 2/\we/0, 3/\raisebox{10pt}{\hspace{4.5pt}\ensuremath{\vee^{1}}}/1, 4/\raisebox{10pt}{\hspace{4.5pt}\ensuremath{\vee^{2}}}/2, 5/\we/3, 6/\we/4, 7/\raisebox{10pt}{\hspace{4.5pt}\ensuremath{\vee^{3}}}/5, 8/\we/6, 9/\we/7}{}}}\ldots\\
R_{2}R_{2}R_{2}R_{3}R_{3}\left(D_{\mu}\right) & = & \ldots\raisebox{-.7cm}{\text{\cd{9}{1/\we/-1, 2/\we/0, 3/\raisebox{10pt}{\hspace{4.5pt}\ensuremath{\vee^{1}}}/1, 4/\we/2, 5/\we/3, 6/\we/4, 7/\raisebox{10pt}{\hspace{4.5pt}\ensuremath{\vee^{3}}}/5, 8/\we/6, 9/\raisebox{10pt}{\hspace{4.5pt}\ensuremath{\vee^{2}}}/7}{}}}\ldots\\
D_{\lambda}=R_{1}R_{1}R_{1}R_{2}R_{2}R_{2}R_{3}R_{3}\left(D_{\mu}\right) & = & \ldots\raisebox{-.7cm}{\text{\cd{9}{1/\we/-1, 2/\we/0, 3/\we/1, 4/\we/2, 5/\we/3, 6/\raisebox{10pt}{\hspace{4.5pt}\ensuremath{\vee^{1}}}/4, 7/\raisebox{10pt}{\hspace{4.5pt}\ensuremath{\vee^{3}}}/5, 8/\we/6, 9/\raisebox{10pt}{\hspace{4.5pt}\ensuremath{\vee^{2}}}/7}{}}}\ldots.
\end{eqnarray*}
The image of this non-trivial path under the map $\Theta_{\lambda,\mu}$
is the cycle $\left(23\right)$. There are no other paths, because
if positions 4 and 5 were filled before position 7 then position 7
would be held, making the path impossible to complete. 
\end{example}
\textcolor{black}{For an element $\nu\in P$ with $\mbox{atp}\left(\nu\right)=r$
let $S_{\nu}=\left\{ \varepsilon_{m_{1}}-\delta_{n_{1}},\ldots,\varepsilon_{m_{r}}-\delta_{n_{r}}\right\} $
be such that $n_{1}<....<n_{r}$. We denote $\nu_{i}:=\left(\nu,\delta_{n_{i}}\right)$.
Then $\vee_{k}=\left(\mu^{\rho}\right)_{k}$ and that $\check{\vee}_{k}=\left(\lambda^{\rho}\right)_{k}$.}

In the following lemma we describe the image of $\Theta_{\lambda,\mu}$
for an arbitrary piecewise disconnected weight. 
\begin{lem}
\label{lem:pdc}If $\lambda\in\mathbb{P}^{+}$ is piecewise disconnected,
then 
\[
Im\ \Theta_{\lambda,\mu}=\left\{ \sigma\in Sym(r)\mid\sigma(\mu^{\rho})\preceq\lambda^{\rho},\text{ and }\sigma^{-1}(j)<\sigma^{-1}(k)\text{ if }j<k\text{ and }j\sim k\right\} ,
\]
where $j\sim k$ when $j$ and $k$ label $\check{\vee}$'s from the
same atypical component of $\lambda$.\end{lem}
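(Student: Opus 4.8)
The plan is to prove the two inclusions defining $Im\,\Theta_{\lambda,\mu}$. It is convenient to regard the $\vee$'s of a weight diagram as labelled particles that, under a right move $R_{i}$, only ever jump rightward onto the $\wedge$ they mark; a path $\theta\in P_{\lambda,\mu}$ then transports the particle starting at $\vee_{k}=(\mu^{\rho})_{k}$ to $\check{\vee}_{\sigma_{\theta}(k)}=(\lambda^{\rho})_{\sigma_{\theta}(k)}$, and $\sigma_{\theta}$ is precisely this bijection.

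\emph{First inclusion.} Let $\sigma=\sigma_{\theta}$ for some $\theta\in P_{\lambda,\mu}$. Since $P_{\lambda,\mu}$ is nonempty, Remark~\ref{remark on C Lexi} gives $\mu^{\rho}\preceq\lambda^{\rho}$, so $\mu^{\rho}$ and $\lambda^{\rho}$ agree on their typical entries and on their atypicality; permuting atypical entries by $\sigma$ preserves both, while the monotonicity of right moves yields $(\mu^{\rho})_{k}\le(\lambda^{\rho})_{\sigma(k)}$ for all $k$, and together these say exactly $\sigma(\mu^{\rho})\preceq\lambda^{\rho}$. For the order condition, assume $j<k$, $j\sim k$, and --- for contradiction --- $a:=\sigma^{-1}(j)>b:=\sigma^{-1}(k)$. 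The particle $\vee_{b}$ begins strictly left of $\vee_{a}$ but, because $\check{\vee}_{j}$ lies strictly left of $\check{\vee}_{k}$, ends strictly right of it; hence at some step $\vee_{b}$ jumps over $\vee_{a}$, and non-crossing of the cap diagram forces the $\wedge$ marked by $\vee_{a}$ at that step to lie strictly between the two particles. I would then check, by tracking the cap structure, that from that step on a $\wedge$ remains trapped strictly between $\vee_{a}$ and $\vee_{b}$ (more precisely, the number of $\wedge$'s between them never drops below the number of $\vee$'s between them that are destined for the same component). In $D_{\lambda}$ this contradicts $j\sim k$, since an atypical component contains no $\wedge$. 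This direction does not use the PDC hypothesis.

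\emph{Second inclusion.} Given $\sigma$ in the right-hand set, I would realise it by an explicit path, built by moving the particles in the order prescribed by the right-path condition (non-decreasing move indices) and driving each $\vee_{k}$ from $(\mu^{\rho})_{k}$ to its target $(\lambda^{\rho})_{\sigma(k)}$ through a run of right moves. The inequality $\sigma(\mu^{\rho})\preceq\lambda^{\rho}$ makes every target lie weakly to the right of its source, and the order condition makes the targets inside each atypical component $T_{i}$ of $D_{\lambda}$ be filled in an order compatible with left-to-right, so that no particle is forced to overshoot its destination. The PDC hypothesis enters to guarantee that the construction never gets \emph{stuck}: when a particle must cross an atypical component $T_{i}$ on the way to a target further right it needs vacant $\wedge$'s just beyond $T_{i}$ to step onto, and the inequality $t_{i}\le s_{i}$ --- at least as many $\wedge$'s between $T_{i}$ and $T_{i+1}$ as there are $\vee$'s in $T_{i}$ --- is exactly what keeps such a $\wedge$ from being prematurely ``held''. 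I expect this no-blocking verification --- checking that the recipe yields a legitimate non-decreasing sequence of right moves landing precisely on $D_{\lambda}$, with the crucial step depending on $t_{i}\le s_{i}$ --- to be the main obstacle. A possibly cleaner route for this inclusion is to invoke Su--Zhang's description of $Im\,\Theta_{\lambda,\mu}$ for general $\lambda$ \cite[Section~3.8]{SZ} and verify that, for $\lambda$ piecewise disconnected, it collapses to the set in the statement, the extra constraints their description imposes when the cap forest is not a disjoint union of lines becoming vacuous precisely under PDC.
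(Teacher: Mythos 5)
Your proposal follows essentially the same route as the paper's proof. Both directions rest on the observation you invoke (once a $\vee$ settles, the $\wedge$ it marks can never again be filled, which for the forward inclusion forces the $\vee$'s to enter each atypical component of $D_\lambda$ from right to left --- the same fact your trapped-$\wedge$ contradiction exposes), and for the reverse inclusion the paper, like you, constructs the path greedily and observes that the PDC inequality $t_i \le s_i$ is exactly what prevents a $\vee$ from holding a $\check{\vee}$ spot.
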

\begin{proof}
Let $\theta\in P_{\lambda,\mu}$. Since the $\vee$'s move in order
from left to right to their respective destinations, we have that
$\vee_{k}\leq\check{\vee}_{\sigma_{\theta}(k)}$. This ensures that
$\sigma(\mu^{\rho})\preceq\lambda^{\rho}$. When an $\vee$ reaches
its destination, it marks the next $\wedge$ after it. Hence, the
$\vee$'s must go in order into each atypical component so that every
spot can be filled, that is, if $j<k$ and $j\sim k$ then $\sigma_{\theta}^{-1}(j)<\sigma_{\theta}^{-1}(k)$.
Hence, we always have inclusion. When $\lambda$ is piecewise disconnected,
these conditions on $\sigma\in Sym(r)$ are sufficient to define a
path $\theta$ from $D_{\mu}$ to $D_{\lambda}$ which satisfies $\vee_{k}\mapsto\check{\vee}_{\sigma_{\theta}(k)}$.
Indeed, the number of $\wedge$'s following an atypical component
and preceding the next is greater than or equal to the number of $\vee$'s
in a given atypical component, so an $\vee$ does not hold an $\check{\vee}$
spot.\end{proof}
\begin{rem}
If $\lambda$ is not piecewise disconnected then Lemma \ref{lem:pdc}
does not hold. See \linebreak{}
 \cite[Section 3.8]{SZ} for a description of the image in the general
case. 
\end{rem}
In the following lemma we change the defining conditions of the set
from Lemma \ref{lem:pdc} by replacing $\lambda^{\rho}$ with $\left(\lambda^{\rho}\right)^{\Uparrow}$,
and then we show that this does not change the set. 
\begin{lem}
\label{lem: image}If $\lambda\in\mathbb{P}^{+}$ is piecewise disconnected,
then

\begin{equation}
Im\ \Theta_{\lambda,\mu}=\left\{ \sigma\in Sym(r)\mid\sigma(\mu^{\rho})\preceq(\lambda^{\rho})^{\Uparrow},\text{ and }\sigma^{-1}(j)<\sigma^{-1}(k)\text{ if }j<k\text{ and }j\sim k\right\} .\label{eq:image of paths}
\end{equation}
\end{lem}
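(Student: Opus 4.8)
The plan is to show the two sets in \eqref{eq:image of paths} and in Lemma \ref{lem:pdc} coincide, which reduces the lemma to verifying that replacing $\lambda^{\rho}$ by $(\lambda^{\rho})^{\Uparrow}$ does not change the set of admissible permutations. Since $(\lambda^{\rho})^{\Uparrow}$ is obtained from $\lambda^{\rho}$ by only raising certain atypical entries (never lowering), we have $\lambda^{\rho}\preceq(\lambda^{\rho})^{\Uparrow}$ whenever both have the same typical entries and atypicality; in particular the condition $\sigma(\mu^{\rho})\preceq(\lambda^{\rho})^{\Uparrow}$ is a priori weaker than $\sigma(\mu^{\rho})\preceq\lambda^{\rho}$, so one inclusion is immediate. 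For the reverse inclusion, I would take $\sigma\in Sym(r)$ satisfying $\sigma(\mu^{\rho})\preceq(\lambda^{\rho})^{\Uparrow}$ together with the order-preserving condition on each atypical component, and show that in fact $\sigma(\mu^{\rho})\preceq\lambda^{\rho}$.

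The key step is the following observation: if $j\sim k$ with $j<k$ (so $\check\vee_j$ and $\check\vee_k$ lie in the same atypical component $T_p$), then by the definition of $(\lambda^\rho)^{\Uparrow}$ the $j$-th and $k$-th atypical entries of $(\lambda^{\rho})^{\Uparrow}$ are \emph{equal}, namely the maximal atypical entry in $T_p$, while the corresponding atypical entries of $\lambda^{\rho}$ may differ; moreover within a component the atypical entries of $\lambda^{\rho}$ are strictly decreasing in the $\delta$-index, so the $j$-th is strictly larger than the $k$-th. Now for a fixed atypical index $k$, let $p$ be such that $\check\vee_k\in T_p$, and let $k_0\le k$ be the index of the leftmost $\vee$ in $T_p$. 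Using the order-preserving condition $\sigma^{-1}(k_0)<\cdots<\sigma^{-1}(k)$ and the fact that the entries $\vee_{\sigma^{-1}(k_0)},\dots,\vee_{\sigma^{-1}(k)}$ of $\mu^{\rho}$ are themselves weakly decreasing in that order (they are a sub-list of the decreasing list of atypical entries of $\mu^{\rho}$, read in the induced order), I would bound the $k$-th atypical entry of $\sigma(\mu^{\rho})$: it equals $\mu^{\rho}_{\sigma^{-1}(k)}$, which is at most $\mu^{\rho}_{\sigma^{-1}(k_0)}$, which by the hypothesis $\sigma(\mu^{\rho})\preceq(\lambda^{\rho})^{\Uparrow}$ (applied at position $k_0$) is at most the $k_0$-th atypical entry of $(\lambda^{\rho})^{\Uparrow}$, which equals the $k_0$-th atypical entry of $\lambda^{\rho}$ (since $k_0$ is the maximal atypical entry's position in its component and $(\cdot)^{\Uparrow}$ fixes the maximum of each component). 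But the $k_0$-th atypical entry of $\lambda^{\rho}$ \emph{is} the maximum of $T_p$, hence at least the $k$-th atypical entry of $\lambda^{\rho}$. Chaining these inequalities gives $(\sigma(\mu^{\rho}))_k\le(\lambda^{\rho})_k$ at every atypical position $k$, and at typical positions nothing changed, so $\sigma(\mu^{\rho})\preceq\lambda^{\rho}$.

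I expect the main obstacle to be bookkeeping: keeping straight the three orderings in play (the left-to-right order of $\vee$'s in $D_\mu$, the left-to-right order of $\check\vee$'s in $D_\lambda$, and the permutation $\sigma$ relating them), and in particular justifying cleanly that $\mu^{\rho}_{\sigma^{-1}(k_0)}\ge\mu^{\rho}_{\sigma^{-1}(k)}$ — this is where one uses that the atypical entries of $\mu^{\rho}$, indexed in increasing $\delta$-order (equivalently by the numbering $\vee_1,\dots,\vee_r$), form a strictly decreasing sequence, combined with $\sigma^{-1}(k_0)<\sigma^{-1}(k)$. Once that monotonicity is in hand, the inequality chain above is routine. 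I would present it by fixing an arbitrary atypical position $k$, introducing $k_0$ as the leftmost index in its component, and writing out the four-term inequality explicitly, then noting the typical entries are untouched to conclude $\preceq$.
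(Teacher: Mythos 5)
The easy inclusion is fine: since $\lambda^{\rho}\preceq(\lambda^{\rho})^{\Uparrow}$, the set in Lemma~\ref{lem:pdc} is contained in the right-hand side, so $\mathrm{Im}\,\Theta_{\lambda,\mu}\subseteq B_{\lambda,\mu}$ is immediate. The reverse inclusion is the substance of the lemma, and there your argument breaks down.

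First, there are orientation errors. With the paper's conventions (the $\vee$'s are labelled left to right, $\nu_i=(\nu,\delta_{q_i})$, and $q_1<\cdots<q_r$), the atypical entries $\lambda^{\rho}_1<\lambda^{\rho}_2<\cdots<\lambda^{\rho}_r$ are \emph{strictly increasing}, not decreasing; so if $k_0$ is the leftmost index of the component $T_p$, then $\lambda^{\rho}_{k_0}$ is the \emph{minimum} of $T_p$, and $(\lambda^{\rho})^{\Uparrow}_{k_0}=\lambda^{\rho}_{k_1}$ where $k_1$ is the \emph{rightmost} index of $T_p$ — not $\lambda^{\rho}_{k_0}$. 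Likewise, from $\sigma^{-1}(k_0)<\sigma^{-1}(k)$ and strict dominance of $\mu^{\rho}$ one gets $\mu^{\rho}_{\sigma^{-1}(k_0)}<\mu^{\rho}_{\sigma^{-1}(k)}$, the \emph{opposite} of the inequality you wrote. Second, even granting your chain as written, the final step is a non sequitur: you end with $(\sigma(\mu^{\rho}))_k\le(\lambda^{\rho})_{k_0}$ together with $(\lambda^{\rho})_{k_0}\ge(\lambda^{\rho})_k$, and $A\le B$, $B\ge C$ does not imply $A\le C$.

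The real obstruction is that a single-position inequality chain is too weak: $\sigma(\mu^{\rho})\preceq(\lambda^{\rho})^{\Uparrow}$ is genuinely weaker than $\sigma(\mu^{\rho})\preceq\lambda^{\rho}$, and the component-monotonicity condition on $\sigma$ closes the gap only via a propagation or pigeonhole argument. The paper argues by contradiction: choose $s$ maximal with $\lambda^{\rho}_{\sigma(s)}<\mu^{\rho}_{s}\le(\lambda^{\rho})^{\Uparrow}_{\sigma(s)}$. Since the atypical component is connected (no $\wedge$'s) and $\mu^{\rho}$ and $\lambda^{\rho}$ share the same typical entries, the value $\mu^{\rho}_{s}$ must coincide with $\lambda^{\rho}_{j}$ for some $j$ with $\sigma(s)<j\le k_1$, hence $j\sim\sigma(s)$ and by the monotonicity condition $\sigma^{-1}(j)>s$; then $\mu^{\rho}_{\sigma^{-1}(j)}>\mu^{\rho}_{s}=\lambda^{\rho}_{j}$ while still $\mu^{\rho}_{\sigma^{-1}(j)}\le(\lambda^{\rho})^{\Uparrow}_{j}$, contradicting maximality of $s$. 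This extremal step (or an equivalent counting argument over all indices in a component) is what is missing from your proposal, and it is what actually uses the structure of atypical components.
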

\begin{proof}
Let $A_{\lambda,\mu}=LHS$ and $B_{\lambda,\mu}=RHS$. By Lemma \ref{lem:pdc},
$A_{\lambda,\mu}\subseteq B_{\lambda,\mu}$. Now suppose towards a
contradiction that $\sigma\in B_{\lambda,\mu}\setminus A_{\lambda,\mu}$.
Choose $s$ maximal such that $\left(\lambda^{\rho}\right)_{\sigma(s)}<\left(\mu^{\rho}\right)_{s}\leq\left(\lambda^{\rho}\right)_{\sigma(s)}^{\Uparrow}$.
By definition $\left(\lambda^{\rho}\right)_{\sigma(s)}^{\Uparrow}=\left(\lambda^{\rho}\right)_{k}$,
where $k$ is the index of the maximal atypical entry in the atypical
component containing $\left(\lambda^{\rho}\right)_{\sigma(s)}$. Thus
$\left(\mu^{\rho}\right)_{s}=\left(\lambda^{\rho}\right)_{j}$ for
some $\sigma(s)<j\leqslant k$, since the atypical components of $\lambda^{\rho}$
are connected and $\mu^{\rho}$ is regular with the same typical entries
as $\lambda^{\rho}$. Thus $s<\sigma^{-1}(j)$ since $\sigma(s)\sim j$.
Then since $\mu^{\rho}$ is strictly dominant we have that $\left(\lambda^{\rho}\right)_{j}=\left(\mu^{\rho}\right)_{s}<\left(\mu^{\rho}\right)_{\sigma^{-1}(j)}$.
Note that we also have $\left(\mu^{\rho}\right)_{\sigma^{-1}(j)}\leq\left(\lambda^{\rho}\right)_{j}^{\Uparrow}$
since $\sigma\in B_{\lambda,\mu}$. This contradicts the maximality
of $s$, since $\sigma^{-1}(j)$ is larger and satisfies the required
properties. Hence $A_{\lambda,\mu}=B_{\lambda,\mu}$. 
\end{proof}

\subsection{\label{subsection: A bijection of sets}A bijection of indexing sets}

In this section, we change the indexing set of the character formula
in (\ref{eq:char formula with paths}) from $P_{\lambda}$ to a particular
subset of $\left(\lambda^{\rho}-\mathbb{N}S_{\lambda}\right)$.

Fix \emph{$\lambda\in\mathbb{P}^{+}$.} For each $\mu\in P_{\lambda}$,
the $W$ orbit of $\mu^{\rho}$ intersects $\left(\lambda^{\rho}-\mathbb{N}S_{\lambda}\right)$.
We denote by $\overline{\mu}$ the unique maximal element of this
intersection with respect to the standard order on $\mathfrak{h}^{*}$.
We define 
\[
C_{\lambda,\mathrm{reg}}^{\mathrm{Lexi}}:=\left\{ \overline{\mu}\in\lambda^{\rho}-\mathbb{N}S_{\lambda}\ \mid\ \mu\in P_{\lambda}\right\} .
\]
Since $P_{\lambda}\subset\mathbb{P}^{+}$, this defines a bijection
between the sets $P_{\lambda}$ and $C_{\lambda,\mathrm{reg}}^{\mathrm{Lexi}}$.
Recall that $S_{\lambda}=\left\{ \beta_{1},\ldots,\beta_{r}\right\} $
is ordered so that $\beta_{i}=\varepsilon_{p_{i}}-\delta_{q_{i}}$
and $q_{i}<q_{i+1}$. For $\nu\in(\lambda^{\rho})^{\Uparrow}-\mathbb{N}S_{\lambda}$
and $i=1,\ldots,r$, define 
\[
\nu_{\beta_{i}}=(\nu,\delta_{q_{i}}).
\]

\begin{lem}
One has 
\begin{eqnarray*}
C_{\lambda,\mathrm{reg}}^{\mathrm{Lexi}} & = & \left\{ \nu\in\lambda^{\rho}-\mathbb{N}S_{\lambda}\ \mid\ \nu_{\beta_{1}}<\nu_{\beta_{2}}<\ldots<\nu_{\beta_{r}}\ \mathrm{and\ \nu}\ \mathrm{is}\ \mathrm{regular}\right\} .
\end{eqnarray*}
\end{lem}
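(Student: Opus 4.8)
The plan is to prove the two inclusions separately, working through the correspondence $\mu \leftrightarrow \overline{\mu}$ between $P_\lambda$ and $C^{\mathrm{Lexi}}_{\lambda,\mathrm{reg}}$. First I would unwind the definitions: for $\mu \in P_\lambda$, the element $\overline{\mu}$ is the maximal element of $(W\cdot\mu^\rho) \cap (\lambda^\rho - \mathbb{N}S_\lambda)$; since $W = \mathrm{Sym}(m)\times\mathrm{Sym}(n)$ acts by permuting the $\varepsilon$- and $\delta$-indices, the entries of $\overline{\mu}$ are a permutation of the entries of $\mu^\rho$, and the typical entries of $\overline{\mu}$ agree with those of $\lambda^\rho$ (because $\mu^\rho \preceq \lambda^\rho$ forces the same typical entries, and these are untouched by subtracting elements of $\mathbb{N}S_\lambda$). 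The key observation is that subtracting a multiple of $\beta_i = \varepsilon_{p_i}-\delta_{q_i}$ decreases the $\varepsilon_{p_i}$-entry and increases the $\delta_{q_i}$-entry by the same amount, so the quantity $\overline{\mu}_{\beta_i} = (\overline{\mu}, \delta_{q_i})$ is what records how far down $\mu^\rho$ has been pushed along $\beta_i$. Maximality of $\overline{\mu}$ in the standard order then translates into a precise combinatorial description of which permutation-representative we have landed on.

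For the forward inclusion ($C^{\mathrm{Lexi}}_{\lambda,\mathrm{reg}} \subseteq$ RHS): take $\mu \in P_\lambda$. Regularity of $\overline{\mu}$ holds because $\mu \in \mathbb{P}^+$ means $\mu^\rho$ is strictly dominant, hence regular, and regularity is a $W$-invariant property preserved under the identification. For the chain of inequalities $\overline{\mu}_{\beta_1} < \cdots < \overline{\mu}_{\beta_r}$: I would argue that if some $\overline{\mu}_{\beta_i} \geq \overline{\mu}_{\beta_{i+1}}$, then one could apply a suitable element of $W$ (swapping $\delta_{q_i}$ and $\delta_{q_{i+1}}$ entries, or more precisely using the structure of $S_\lambda$) to produce a strictly larger element of $(W\cdot\mu^\rho)\cap(\lambda^\rho - \mathbb{N}S_\lambda)$, contradicting maximality. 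The point is that $\beta_i$ and $\beta_{i+1}$ are orthogonal isotropic roots, so the lattice $\mathbb{N}S_\lambda$ is symmetric enough under the relevant reflections that the "rearrangement" stays inside $\lambda^\rho - \mathbb{N}S_\lambda$; the maximality criterion then forces the $\delta$-entries along the $\beta_i$ to be increasing. I would also need $\overline{\mu} \in \lambda^\rho - \mathbb{N}S_\lambda$, which is immediate from the definition of $C^{\mathrm{Lexi}}_{\lambda,\mathrm{reg}}$.

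For the reverse inclusion (RHS $\subseteq C^{\mathrm{Lexi}}_{\lambda,\mathrm{reg}}$): take $\nu \in \lambda^\rho - \mathbb{N}S_\lambda$ regular with $\nu_{\beta_1} < \cdots < \nu_{\beta_r}$. I need to produce $\mu \in P_\lambda$ with $\overline{\mu} = \nu$. Since $\nu$ is regular, there is $w \in W$ with $w(\nu)$ strictly dominant; set $\mu^\rho := w(\nu)$, which is strictly dominant integral, so $\mu \in \mathbb{P}^+$. One checks $\mu^\rho \preceq \lambda^\rho$ (same typical entries since those of $\nu$ match $\lambda^\rho$; atypical entries of $\mu^\rho$ are $\leq$ those of $\lambda^\rho$ because $\nu \in \lambda^\rho - \mathbb{N}S_\lambda$), hence by Remark \ref{remark on C Lexi} there is a path from $D_\mu$ to $D_\lambda$, so $\mu \in P_\lambda$. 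It then remains to see that $\nu$ is the \emph{maximal} element of $(W\cdot\mu^\rho) \cap (\lambda^\rho - \mathbb{N}S_\lambda)$: any other element of this intersection is obtained from $\nu$ by a permutation, and the increasing condition $\nu_{\beta_1} < \cdots < \nu_{\beta_r}$, combined with strict dominance of $\mu^\rho$ on the typical part, pins down $\nu$ as the unique maximal representative — this is essentially the converse of the maximality argument from the forward direction.

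\textbf{The main obstacle} I anticipate is the careful bookkeeping in the maximality $\Leftrightarrow$ increasing-$\delta$-entries equivalence: one must verify that the rearrangement which "improves" a non-increasing $\nu$ genuinely stays inside the affine lattice coset $\lambda^\rho - \mathbb{N}S_\lambda$ (not just $\lambda^\rho - \mathbb{Z}S_\lambda$), and simultaneously strictly increases the weight in the standard partial order. This requires using orthogonality of the $\beta_i$ and the specific ordering $q_i < q_{i+1}$ together with the fact that the atypical $\varepsilon$-positions $p_i$ need not be ordered, so the argument should be phrased purely in terms of the $\delta$-entries $\nu_{\beta_i}$ and the action of $\mathrm{Sym}(n)$ on the $\delta$-indices. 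Everything else is routine translation between the path/diagram picture and the weight-lattice picture.
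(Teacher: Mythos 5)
Your proof is correct and follows essentially the same route as the paper: the forward inclusion rests on strict dominance of $\mu^{\rho}$ (giving regularity and, via the maximality of $\overline{\mu}$, the increasing $\delta$-entries), and the reverse inclusion on Remark \ref{remark on C Lexi} applied to the strictly dominant representative $w(\nu)$. One small correction to your parenthetical worry: the atypical $\varepsilon$-positions $p_i$ are in fact forced to be strictly decreasing in $i$ (since $a_{p_i}=b_{q_i}$ with the $a$'s decreasing and $b$'s increasing), which is exactly what makes $\beta_j-\beta_i\in Q^{+}$ for $i<j$ and hence the sort-to-increasing swap produce a strictly larger element of $\lambda^{\rho}-\mathbb{N}S_{\lambda}$ — so the $\varepsilon$-side cannot be dropped from the argument, but it cooperates.
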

\begin{proof}
Clearly we have $\subseteq$, since $\mu^{\rho}$ is strictly dominant.
The reverse inclusion follows from Remark \ref{remark on C Lexi}
since for regular $\nu\in\lambda^{\rho}-\mathbb{N}S_{\lambda}$ and
$w\in W$ with $w(\nu)$ strictly dominant, $w(\nu)\preceq\lambda^{\rho}$
by definition.\end{proof}
\begin{defn}
\label{def of d bar}For $\bar{\mu}\in C_{\lambda,\mathrm{reg}}^{\mathrm{Lexi}}$,
define $\bar{d}{}_{\lambda,\bar{\mu}}$ to be the number of paths
from $D_{\mu}$ to $D_{\lambda}$, where $\mu$ is the unique dominant
element in the $W$ orbit of $\bar{\mu}$. 
\end{defn}
The following lemma is proven using techniques from \cite[Section 4.1]{SZ}. 
\begin{lem}
One has\label{lem:Lexi sum} 
\[
e^{\rho}R\cdot\mbox{ch }L\left(\lambda\right)=\sum_{\bar{\mu}\ \in\ C_{\lambda,\mathrm{reg}}^{\mathrm{Lexi}}}\bar{d}{}_{\lambda,\bar{\mu}}\left(-1\right)^{\left|\lambda^{\rho}-\bar{\mu}\right|_{S_{\lambda}}}\mathcal{F}_{W}\left(e^{\bar{\mu}}\right).
\]
\end{lem}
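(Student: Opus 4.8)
The plan is to start from Corollary \ref{cor:character with paths}, which expresses $e^{\rho}R\cdot\mbox{ch }L(\lambda)$ as a sum over $\mu\in P_{\lambda}$ of $d_{\lambda,\mu}(-1)^{l_{\lambda,\mu}}\mathcal{F}_{W}(e^{\mu^{\rho}})$, and to regroup this sum according to $W$-orbits. Since $\mathcal{F}_{W}$ is $W$-skew-invariant, for any $w\in W$ we have $\mathcal{F}_{W}(e^{w(\nu)})=(-1)^{l(w)}\mathcal{F}_{W}(e^{\nu})$; in particular, if $\mu^{\rho}$ and $\bar\mu$ lie in the same $W$-orbit with $\bar\mu=w(\mu^{\rho})$, then $\mathcal{F}_{W}(e^{\mu^{\rho}})=(-1)^{l(w)}\mathcal{F}_{W}(e^{\bar\mu})$. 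The bijection between $P_\lambda$ and $C^{\mathrm{Lexi}}_{\lambda,\mathrm{reg}}$ sending $\mu\mapsto\bar\mu$ established in Section \ref{subsection: A bijection of sets} lets me reindex the whole sum by $\bar\mu\in C^{\mathrm{Lexi}}_{\lambda,\mathrm{reg}}$, with $d_{\lambda,\mu}=\bar d_{\lambda,\bar\mu}$ by Definition \ref{def of d bar}.

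The substance of the proof is therefore a single sign computation: I need to show that for each $\mu\in P_\lambda$, writing $\bar\mu=w(\mu^{\rho})$ for the unique $w\in W$ taking $\mu^{\rho}$ to its representative in $\lambda^{\rho}-\mathbb{N}S_\lambda$, one has
\[
(-1)^{l_{\lambda,\mu}}\cdot(-1)^{l(w)}=(-1)^{|\lambda^{\rho}-\bar\mu|_{S_\lambda}}.
\]
First I would observe that $|\lambda^\rho-\bar\mu|_{S_\lambda}$ is well defined because $\bar\mu\in\lambda^\rho-\mathbb{N}S_\lambda$, and that both $\mu^\rho$ and $\bar\mu$ are obtained from $\lambda^\rho$ by subtracting a non-negative combination of the $\beta_i$'s — for $\mu^\rho$ this uses Remark \ref{remark on C Lexi} ($\mu^\rho\preceq\lambda^\rho$ forces the atypical entries to only decrease and the typical entries to agree, so $\lambda^\rho-\mu^\rho\in\mathbb{N}S_\lambda$). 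Hence $|\lambda^\rho-\mu^\rho|_{S_\lambda}$ also makes sense. The key parity identities I would isolate are: (i) $l_{\lambda,\mu}\equiv|\lambda^\rho-\mu^\rho|_{S_\lambda}\pmod 2$, which should follow from analyzing the trivial path — each right move $R_i$ increments the position of $\vee_i$ by one, and summing the total displacement over all $r$ of the $\vee$'s gives exactly $|\lambda^\rho-\mu^\rho|_{S_\lambda}$ (here I must be slightly careful that the spots vacated by earlier $\vee$'s are counted correctly, but Brundan's Lemma \ref{thm:Brundan mod 2} already guarantees $l(\theta)\equiv l_{\lambda,\mu}$ for every path, so it suffices to compute the parity of one convenient path or of $l_{\lambda,\mu}$ directly); and (ii) $l(w)\equiv|\lambda^\rho-\mu^\rho|_{S_\lambda}-|\lambda^\rho-\bar\mu|_{S_\lambda}\pmod 2$, reflecting that $w$ moves the $\delta$-entries of $\mu^\rho$ (and correspondingly $\varepsilon$-entries) past each other, and the length of such a permutation mod $2$ is governed by the total ``amount'' of rearrangement, which is precisely the difference between $|\lambda^\rho-\mu^\rho|_{S_\lambda}$ and $|\lambda^\rho-\bar\mu|_{S_\lambda}$. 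Combining (i) and (ii) gives $l_{\lambda,\mu}+l(w)\equiv|\lambda^\rho-\bar\mu|_{S_\lambda}\pmod 2$, which is exactly the identity needed.

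Assembling: starting from Corollary \ref{cor:character with paths}, replace each term $d_{\lambda,\mu}(-1)^{l_{\lambda,\mu}}\mathcal{F}_W(e^{\mu^\rho})$ by $\bar d_{\lambda,\bar\mu}(-1)^{l_{\lambda,\mu}}(-1)^{l(w)}\mathcal{F}_W(e^{\bar\mu})=\bar d_{\lambda,\bar\mu}(-1)^{|\lambda^\rho-\bar\mu|_{S_\lambda}}\mathcal{F}_W(e^{\bar\mu})$ using the sign identity, and re-sum over $\bar\mu\in C^{\mathrm{Lexi}}_{\lambda,\mathrm{reg}}$ via the bijection. I expect the main obstacle to be pinning down identity (ii) — tracking the length of the Weyl group element $w$ in terms of $S_\lambda$-coefficients requires care, since $w$ acts separately on the $\varepsilon$- and $\delta$-indices and one must check that the relevant contributions are consistent and that regularity (used throughout Section \ref{subsection: A bijection of sets}) makes $w$ unique. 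The parity statement (i) is essentially bookkeeping on top of Lemma \ref{thm:Brundan mod 2}, so it should be routine; it is the interaction of the non-dominant representative $\bar\mu$ with the path length that carries the weight of the argument, and this is where the cited techniques from \cite[Section 4.1]{SZ} come in.
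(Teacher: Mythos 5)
Your overall reduction is the right one and matches the paper's: by Corollary \ref{cor:character with paths} it suffices to show $(-1)^{l_{\lambda,\mu}}\mathcal{F}_W(e^{\mu^\rho})=(-1)^{|\lambda^\rho-\bar\mu|_{S_\lambda}}\mathcal{F}_W(e^{\bar\mu})$, which by skew-invariance of $\mathcal{F}_W$ reduces to the sign identity $l_{\lambda,\mu}+l(w')\equiv|\lambda^\rho-\bar\mu|_{S_\lambda}\ (\mathrm{mod}\ 2)$ where $w'(\mu^\rho)=\bar\mu$. However, your proposed decomposition of this identity into claims (i) and (ii) is where the argument breaks.

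Claim (i) rests on the assertion that ``each right move $R_i$ increments the position of $\vee_i$ by one.'' This is false: $R_i$ sends $\vee_i$ to the first unmarked $\wedge$ to its right, which may be several positions away (it jumps over $\times$'s and $\circ$'s). So $l_{\lambda,\mu}$ (the \emph{number} of right moves) and the total atypical displacement can have different parities. Concretely, take $\mathfrak{gl}(3|2)$, $\lambda^\rho=4\varepsilon_1+3\varepsilon_2+\varepsilon_3-\delta_1-4\delta_2$, $\mu^\rho=3\varepsilon_1+2\varepsilon_2+\varepsilon_3-\delta_1-2\delta_2$. Then $D_\mu$ has $\vee$'s at $1,2$ and a $\times$ at $3$, $D_\lambda$ has $\vee$'s at $1,4$; the trivial path is the single move $R_2$ (sending $\vee_2$ from $2$ to $4$, skipping the $\times$), so $l_{\lambda,\mu}=1$, but the total atypical displacement is $2$. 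Here $\bar\mu=\lambda^\rho-2\beta_2$ with $w'=s_{\varepsilon_1-\varepsilon_2}$, so $l(w')=1$, and only the combined identity $1+1=2$ holds, while each of your (i) and (ii) fails.

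Claim (ii) is also problematic: the quantity $|\lambda^\rho-\mu^\rho|_{S_\lambda}$ is in general undefined, because $\lambda^\rho-\mu^\rho$ need not lie in $\mathbb{Z}S_\lambda$ when the atypical $\varepsilon$-entries of $\mu^\rho$ have been displaced past typical ones (in the example above, $\lambda^\rho-\mu^\rho=\varepsilon_1+\varepsilon_2-2\delta_2\notin\mathbb{Z}S_\lambda$). If instead one interprets it as the sum of differences of the $i$-th atypical entries, it is identically equal to $|\lambda^\rho-\bar\mu|_{S_\lambda}$ (since $\bar\mu$ and $\mu^\rho$ have the same sorted multiset of atypical entries), and your (ii) would then claim $l(w')$ is always even, which is false. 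Also, $w'$ need not ``simultaneously'' permute $\varepsilon$'s and $\delta$'s in pairs; it may act only on one side.

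The paper's actual proof takes a different route: it proves the \emph{exact} integer identity $|\lambda^\rho-\bar\mu|_{S_\lambda}=l_{\lambda,\mu}+l(w')$ by observing that the total displacement of the $\vee$'s along the trivial path decomposes as (number of right moves)$+$(number of $\times$'s and $\circ$'s skipped), and then constructing an explicit reduced expression for $w'$ as a product, over the skipped spots, of simple reflections $s_{\varepsilon_l-\varepsilon_{l+1}}$ or $s_{\delta_l-\delta_{l+1}}$, thereby identifying $l(w')$ with the number of skipped spots. You would need to replace your (i) and (ii) with this decomposition of the displacement, and in particular the reduced-expression argument for $w'$ is not routine bookkeeping but the heart of the lemma.
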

\begin{proof}
By Corollary \ref{cor:character with paths} it suffices to show that
for each $\mu\in P_{\lambda}$, 
\[
\left(-1\right)^{l_{\lambda,\mu}}\mathcal{F}_{W}\left(e^{\mu^{\rho}}\right)=\left(-1\right)^{\left|\lambda^{\rho}-\overline{\mu}\right|_{S_{\lambda}}}\mathcal{F}_{W}\left(e^{\overline{\mu}}\right).
\]
Let $w'\in W$ such that $w'(\mu^{\rho})=\overline{\mu}$. To complete
the proof it is sufficient to show that $\left|\lambda^{\rho}-\overline{\mu}\right|_{S_{\lambda}}=l_{\lambda,\mu}+l\left(w'\right)$.
The number $\left|\lambda^{\rho}-\overline{\mu}\right|_{S_{\lambda}}$
is the sum of the differences between the atypical entries of $\lambda^{\rho}$
and $\overline{\mu}$. This is equal to the number of moves in the
trivial path $l_{\lambda,\mu}$ plus the number of spots being skipped.
We will show that $l\left(w'\right)$ is exactly the number of spots
skipped in the trivial path.

The element $w'\in W$ for which $w'(\mu^{\rho})=\overline{\mu}$
can be described explicitly in terms of the trivial path $\theta$.
Denote $\theta=R_{i_{1}}\circ\dots\circ R_{i_{N}}$, then $w'=w_{1}\cdot\dots\cdot w_{N}$
where each $w_{j}$ is defined as follows. Suppose that the move $R_{i_{j}}$
moved the $\vee$ at $n_{j}$ to an $\wedge$ at $n_{j}+k_{j}+1$,
namely, it skipped over $k_{j}$ spots with $\times$'s and $\circ$'s.
Then $w_{j}=s_{1}\cdot\dots\cdot s_{k_{j}-1}$ where $s_{i}$ is of
the form $s_{\varepsilon_{l}-\varepsilon_{l+1}}$ if the $i$-th skip
is over the $\times$ of $\varepsilon_{l}$ and is of the form $s_{\delta_{l}-\delta_{l+1}}$
if it is over the $\circ$ of $\delta_{l}$. It is easy to see that
the expression is reduced, so $l\left(w_{j}\right)=k_{j}$ is the
number of spots skipped in the move $R_{i_{j}}$. Also $l\left(w'\right)=\sum l\left(w_{i}\right)$,
so $l\left(w'\right)$ is exactly the number of spots skipped in the
trivial path. 
\end{proof}

\subsection{Paths and permutations for piecewise disconnected weights}

In this section, we show that if $\lambda\in\mathbb{P}^{+}$ is a
piecewise disconnected weight, then for each $\mu\in P_{\lambda}$
there exists a $t_{\lambda}$ to $1$ map from the set of paths from
$\mu$ to $\lambda$ to a certain subset of the Weyl group. This is
a crucial step in the proof of the main theorem.

Let $W_{r}$ be the subgroup of $W$ that permutes $S_{\lambda}$.
Then $W_{r}\cong Sym(r)$ and is generated by elements of the form
$s{}_{\varepsilon_{i}-\varepsilon_{j}}s_{\delta_{i'}-\delta_{j'}}$
where $\varepsilon_{i}-\delta_{i'},\varepsilon_{j}-\delta_{j'}\in S_{\lambda}$.
So $\left|W_{r}\right|=r!$ and all $w\in W_{r}$ have positive sign.

Fix $\lambda\in\mathbb{P}^{+}$, and recall the notation of Section
\ref{sub:Piecewise-disconnected-weights}. We define a subgroup of
$W_{r}$ that preserves the atypical components of $\lambda^{\rho}$,
that is,

\begin{equation}
W_{r}(t_{\lambda})=\left\langle s{}_{\varepsilon_{i}-\varepsilon_{j}}s_{\delta_{i'}-\delta_{j'}}\mid i\sim j\right\rangle .
\end{equation}
So $w\in W_{r}(t_{\lambda})$ and $\lambda_{\beta}\in T_{i}$ imply
that $\lambda_{w(\beta)}\in T_{i}$. Clearly, 
\[
W_{r}(t_{\lambda})\cong Sym(t_{1})\vee\cdots\vee Sym(t_{N})
\]
and hence $W_{r}(t_{\lambda})$ has cardinality $t_{\lambda}$. 
\begin{defn}
\label{def of c}For each $\nu\in C_{\lambda,\mathrm{reg}}^{\mathrm{Lexi}}$,
let 
\[
W_{r}(\lambda,\nu):=\left\{ w\in W_{r}\mid w(\nu)\in(\lambda^{\rho})^{\Uparrow}-\mathbb{N}S_{\lambda}\right\} ,
\]
and let $c_{\lambda,\nu}=|W_{r}(\lambda,\nu)|.$ 
\end{defn}
Then 
\begin{equation}
\mathcal{F}_{W}\left(\sum_{w\in W_{r}(\lambda,\nu)}e^{w(\nu)}\right)=c_{\lambda,\nu}\cdot\mathcal{F}_{W}\left(e^{\nu}\right).\label{eq:rewrite sum}
\end{equation}

\begin{prop}
\label{prop:use pdc}Let $\lambda\in\mathbb{P}^{+}$ be a piecewise
disconnected weight. Then for every $\mu\in P_{\lambda}$, the number
of paths from $D_{\mu}$ to $D_{\lambda}$ equals $\frac{1}{t_{\lambda}}|W_{r}(\lambda,\bar{\mu})|$\textup{.
Hence, for each $\nu\in C_{\lambda,\mathrm{reg}}^{\mathrm{Lexi}}$,
we have that $\frac{\bar{d}{}_{\lambda,\nu}}{c_{\lambda,\nu}}=\frac{1}{t_{\lambda}}$.}\end{prop}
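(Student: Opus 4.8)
The plan is to count paths from $D_\mu$ to $D_\lambda$ directly via Lemma \ref{lem: image}, and to count elements of $W_r(\lambda,\bar\mu)$ directly from Definition \ref{def of c}, and to exhibit the $t_\lambda$-to-$1$ correspondence between them combinatorially. Recall that $\Theta_{\lambda,\mu}$ is injective, so $\bar d_{\lambda,\bar\mu} = |P_{\lambda,\mu}| = |\mathrm{Im}\ \Theta_{\lambda,\mu}|$, and by Lemma \ref{lem: image} this image is
\[
\left\{\sigma\in Sym(r)\mid \sigma(\mu^\rho)\preceq(\lambda^\rho)^{\Uparrow},\ \sigma^{-1}(j)<\sigma^{-1}(k)\text{ when }j<k,\ j\sim k\right\}.
\]
On the other side, since $\bar\mu=w'(\mu^\rho)$ for the explicit $w'$ of Lemma \ref{lem:Lexi sum} and $W_r\cong Sym(r)$, conjugating by $w'$ identifies $W_r(\lambda,\bar\mu)$ with the set of $\tau\in Sym(r)$ such that $\tau(\mu^\rho)\in(\lambda^\rho)^{\Uparrow}-\mathbb N S_\lambda$, i.e.\ (since both have the same typical entries and $S_\lambda$ is fixed by $W$) with $\{\tau\in Sym(r)\mid \tau(\mu^\rho)\preceq(\lambda^\rho)^{\Uparrow}\}$. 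Thus the two sets differ only by the extra monotonicity-within-components condition $\sigma^{-1}(j)<\sigma^{-1}(k)$ for $j\sim k$.

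The key step is then purely group-theoretic. Let
\[
X=\{\tau\in Sym(r)\mid \tau(\mu^\rho)\preceq(\lambda^\rho)^{\Uparrow}\},\qquad
Y=\{\sigma\in X\mid \sigma^{-1}(j)<\sigma^{-1}(k)\text{ when }j<k,\ j\sim k\}.
\]
I claim $X$ is a union of right cosets of $W_r(t_\lambda)\cong Sym(t_1)\vee\cdots\vee Sym(t_N)$, each of which meets $Y$ in exactly one element; since $|W_r(t_\lambda)|=t_\lambda$ this gives $|X|=t_\lambda|Y|$, i.e.\ $c_{\lambda,\bar\mu}=t_\lambda\,\bar d_{\lambda,\bar\mu}$, which is the assertion. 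For the coset structure: if $\sigma\in X$ and $w\in W_r(t_\lambda)$, then $\sigma w$ sends the block of indices in $T_i$ to the same set of $\check\vee$-positions as $\sigma$ does (only permuted within the block), and since all those target positions lie in one atypical component of $(\lambda^\rho)^{\Uparrow}$ — where, by Definition \ref{def: uparrow}, the entries are all equal to the maximal atypical entry of that component — the comparison $\sigma w(\mu^\rho)\preceq(\lambda^\rho)^{\Uparrow}$ holds iff $\sigma(\mu^\rho)\preceq(\lambda^\rho)^{\Uparrow}$ does. (It is exactly here that the $\Uparrow$-modification, rather than $\lambda^\rho$ itself, is needed: against $\lambda^\rho$ the value $(\lambda^\rho)_j$ varies within a component and the coset would not lie entirely in $X$.) So $X$ is $W_r(t_\lambda)$-stable on the right. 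Within a fixed coset $\sigma W_r(t_\lambda)$, the condition defining $Y$ says precisely that, for each component $T_i$, the preimages of the indices $j\sim k$ in $T_i$ appear in increasing order; since $W_r(t_\lambda)$ acts simply transitively on the orderings of each block independently, exactly one representative of the coset satisfies this for all blocks simultaneously. Hence $|\sigma W_r(t_\lambda)\cap Y|=1$.

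The main obstacle I expect is the verification that the coset $\sigma W_r(t_\lambda)$ is genuinely contained in $X$ — equivalently, that permuting the $\vee$'s that land in a single atypical component of $D_\lambda$ among themselves neither destroys the inequalities $\vee_k\le\check\vee_{\sigma(k)}$ defining $\preceq$ nor changes regularity. This requires using that $\mu^\rho$ is strictly dominant (so its entries are strictly decreasing, hence within a block of $t_i$ positions they are the $t_i$ distinct values being matched to the $t_i$ distinct target entries) together with the flatness of $(\lambda^\rho)^{\Uparrow}$ on each atypical component; care is needed because the raw weight $\lambda^\rho$ does \emph{not} have this flatness, which is the whole reason Lemma \ref{lem: image} was proved first. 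Once that containment is in hand, the counting $|X|=t_\lambda|Y|$ is immediate, and dividing the conclusion of Lemma \ref{lem:Lexi sum}'s setup by $c_{\lambda,\nu}$ gives $\bar d_{\lambda,\nu}/c_{\lambda,\nu}=1/t_\lambda$ as stated.
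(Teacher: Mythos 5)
Your decomposition of the problem — identify $Y=\mathrm{Im}\ \Theta_{\lambda,\mu}$ via Lemma~\ref{lem: image}, identify $X=W_r(\lambda,\bar\mu)$ with $\{\tau\mid\tau(\mu^\rho)\preceq(\lambda^\rho)^{\Uparrow}\}$, and prove $|X|=t_\lambda|Y|$ by a $W_r(t_\lambda)$-coset argument — is exactly the paper's strategy, and the observation that the flatness of $(\lambda^\rho)^{\Uparrow}$ on components is what makes the coset argument possible is the right one to make. However, you have the coset direction reversed, and that is a genuine error. You work with $\sigma W_r(t_\lambda)$, i.e.\ you replace $\sigma$ by $\sigma w$; the correct cosets are $W_r(t_\lambda)\sigma$, i.e.\ one should replace $\sigma$ by $w\sigma$. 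The point is that the defining condition of $Y$ is a monotonicity statement about the \emph{preimages} $\sigma^{-1}(j)$, $j\in T_i$. Replacing $\sigma$ by $w\sigma$ leaves the preimage set $\sigma^{-1}(T_i)$ unchanged, since $(w\sigma)^{-1}(T_i)=\sigma^{-1}w^{-1}(T_i)=\sigma^{-1}(T_i)$, and only reshuffles which of those preimages goes to which index of $T_i$; this is what produces exactly one $Y$-representative in each coset $W_r(t_\lambda)\sigma$. Replacing $\sigma$ by $\sigma w$ instead changes $\sigma^{-1}(T_i)$ to $w^{-1}\sigma^{-1}(T_i)$, which is generally a different set, so the coset $\sigma W_r(t_\lambda)$ has no such uniform structure with respect to $Y$.

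Here is a concrete failure. Take $r=3$, components $T_1=\{1,2\}$ and $T_2=\{3\}$, so $W_r(t_\lambda)=\{e,(12)\}$ and $t_\lambda=2$, with atypical entries of $(\lambda^\rho)^{\Uparrow}$ equal to $(5,5,10)$ and of $\mu^\rho$ equal to $(3,4,5)$. Then $X=Sym(3)$ and $Y=\{e,(23),(132)\}$, and indeed $|X|=6=t_\lambda|Y|$. The cosets $W_r(t_\lambda)\sigma$ are $\{e,(12)\}$, $\{(23),(123)\}$, $\{(13),(132)\}$, each meeting $Y$ in exactly one element. In contrast, the cosets $\sigma W_r(t_\lambda)$ are $\{e,(12)\}$, $\{(23),(132)\}$, $\{(13),(123)\}$, which meet $Y$ in $1$, $2$, and $0$ elements respectively. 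So your counting argument breaks down. Once the direction is corrected to $W_r(t_\lambda)\sigma$, the argument becomes the paper's: closure of $X$ under left multiplication by $W_r(t_\lambda)$ follows from the flatness of $(\lambda^\rho)^{\Uparrow}$ on each $T_i$ (the point you flagged as the main obstacle), and for each $\sigma'\in X$ there is then a unique $w\in W_r(t_\lambda)$ with $w^{-1}\sigma'\in Y$, because the entries of $\sigma'(\mu^\rho)$ indexed by $T_i$ are distinct and $W_r(t_\lambda)$ acts simply transitively on the orderings of each block.
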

\begin{proof}
First, we observe that there is a natural bijection between the sets
$W_{r}(\lambda,\overline{\mu})$ and 
\[
\widetilde{B}_{\lambda,\mu}=\left\{ \sigma\in Sym(r)\mid\sigma(\mu^{\rho})\preceq(\lambda^{\rho})^{\Uparrow}\right\} ,
\]
since the bijective map $P_{\lambda}\rightarrow C_{\lambda,\mathrm{reg}}^{\mathrm{Lex}}$
defined by $\mu^{\rho}\mapsto\overline{\mu}$ preserves the relative
order of the atypical roots. So we may in fact identify $W_{r}(\lambda,\overline{\mu})$
with $\widetilde{B}_{\lambda,\mu}$ under this correspondence.

Now by Lemma \ref{lem: image}, $d_{\lambda,\mu}:=|P_{\lambda,\mu}|$
equals the cardinality of the set in (\ref{eq:image of paths}), which
we denote by $B_{\lambda,\mu}$. We claim that there is a bijection
of sets $W_{r}(t_{\lambda})\vee B_{\lambda,\mu}\cong\widetilde{B}_{\lambda,\mu}$
defined by $(w,\sigma)\mapsto w\sigma$. Now by definition, $\left(\lambda^{\rho}\right)_{j}^{\Uparrow}=\left(\lambda^{\rho}\right)_{k}^{\Uparrow}$
when $\lambda_{j}^{\rho}$ and $\lambda_{k}^{\rho}$ belong to the
same atypical component, that is, when $j\sim k$. Since $W_{r}(t_{\lambda})$
preserves each atypical component, the map is well-defined, that is,
$\sigma(\mu^{\rho})\preceq(\lambda^{\rho})^{\Uparrow}$ implies that
$w\sigma(\mu^{\rho})\preceq(\lambda^{\rho})^{\Uparrow}$ for any $w\in W_{r}(t_{\lambda})$.

If $\sigma\in B_{\lambda,\mu}$, then the atypical entries of each
atypical component of $\sigma(\mu^{\rho})$ are in increasing order
and distinct, since $\sigma\in B_{\lambda,\mu}$ satisfies: $\sigma^{-1}(j)<\sigma^{-1}(k)$
when $j<k$ and $j\sim k$. It is not difficult to show that the map
defined above is bijective. Indeed, given $\sigma'\in\widetilde{B}_{\lambda,\mu}$
there exists a unique $w\in W_{r}(t_{\lambda})$ such that the atypical
entries of each atypical component of $w^{-1}\sigma'(\mu^{\rho})$
are in increasing order, that is, such that $w^{-1}\sigma'\in B_{\lambda,\mu}$.
Therefore, $W_{r}(t_{\lambda})\vee B_{\lambda,\mu}\cong\widetilde{B}_{\lambda,\mu}$
and $t_{\lambda}\cdot d_{\lambda,\mu}=c_{\lambda,\overline{\mu}}$.\end{proof}
\begin{example}
If $\lambda\in\mathbb{P}^{+}$ is not piecewise disconnected, then
the ratio $\frac{\bar{d}{}_{\lambda,\nu}}{c_{\lambda,\nu}}$ is not
necessarily constant. Consider the weight $\lambda$ from Example
\ref{ex:non-pdc adjoint}. If $\mu$ is the weight from Example \ref{ex: non-pdc paths to permutations}
then $\bar{d}{}_{\lambda,\bar{\mu}}=2$ and $c_{\lambda,\bar{\mu}}=6$,
whereas, if $\mu=\lambda$ then $\bar{d}{}_{\lambda,\bar{\mu}}=1$
and $c_{\lambda,\bar{\mu}}=2$. 
\end{example}

\subsection{Enlarging the indexing set}

In this section, we enlarge the indexing set $C_{\lambda,\mathrm{reg}}^{\mathrm{Lexi}}$
by adding non-regular elements, namely, we define 
\[
\overline{C_{\lambda}^{\mathrm{Lexi}}}=\left\{ \nu\in(\lambda^{\rho})^{\Uparrow}-\mathbb{N}S_{\lambda}\ \mid\ \nu_{\beta_{1}}<\nu_{\beta_{2}}<\ldots<\nu_{\beta_{r}}\right\} .
\]

\begin{lem}
\label{lem:add bar}If $\nu\in\overline{C_{\lambda}^{\mathrm{Lexi}}}\setminus C_{\lambda,\mathrm{reg}}^{\mathrm{Lexi}}$,
then $\nu$ is not regular. \end{lem}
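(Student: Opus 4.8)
The statement to prove is Lemma \ref{lem:add bar}: if $\nu\in\overline{C_{\lambda}^{\mathrm{Lexi}}}\setminus C_{\lambda,\mathrm{reg}}^{\mathrm{Lexi}}$, then $\nu$ is not regular. The plan is to argue by contradiction: suppose $\nu$ is regular. Since $\nu\in\overline{C_{\lambda}^{\mathrm{Lexi}}}$, by definition $\nu\in(\lambda^{\rho})^{\Uparrow}-\mathbb{N}S_{\lambda}$ with strictly increasing atypical entries $\nu_{\beta_{1}}<\dots<\nu_{\beta_{r}}$. I want to show that regularity forces $\nu\in\lambda^{\rho}-\mathbb{N}S_{\lambda}$ as well, for then $\nu$ would satisfy all the defining conditions of $C_{\lambda,\mathrm{reg}}^{\mathrm{Lexi}}$ (using the alternate description in the lemma preceding Definition \ref{def of d bar}), contradicting $\nu\notin C_{\lambda,\mathrm{reg}}^{\mathrm{Lexi}}$.

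First I would unwind the relationship between $(\lambda^{\rho})^{\Uparrow}$ and $\lambda^{\rho}$. Writing $\nu=(\lambda^{\rho})^{\Uparrow}-\sum_{i}k_{i}\beta_{i}$ with $k_{i}\in\mathbb{N}$, and recalling that $(\lambda^{\rho})^{\Uparrow}=\lambda^{\rho}+\sum_{i}m_{i}\beta_{i}$ where $m_{i}\ge 0$ records how far the $i$-th atypical entry was raised to the top of its atypical component (Definition \ref{def: uparrow}), we get $\nu=\lambda^{\rho}-\sum_{i}(k_{i}-m_{i})\beta_{i}$. The key point is to show that regularity of $\nu$ forces $k_{i}\ge m_{i}$ for every $i$. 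Suppose not, and pick an index $i$ where $\nu$ violates this. Within the atypical component $T$ containing $\beta_{i}$, the weight $(\lambda^{\rho})^{\Uparrow}$ has all $\varepsilon$-atypical entries (and all $\delta$-atypical entries) equal to the single maximal value, so $(\lambda^{\rho})^{\Uparrow}$ has repeated $\varepsilon$-entries and repeated $\delta$-entries inside $T$. Subtracting $\beta_{j}=\varepsilon_{p_j}-\delta_{q_j}$ lowers the $\varepsilon_{p_j}$-entry and raises the $\delta_{q_j}$-entry (in absolute value it lowers $b_{q_j}$), so the only way to restore regularity — i.e. to make the $\varepsilon$-entries within $T$ distinct and the $\delta$-entries within $T$ distinct — is to subtract enough of each $\beta_{j}$ to spread those entries back out; and the minimal amount needed to separate them is exactly $m_j$, since that is precisely the gap that was collapsed when forming $(\lambda^{\rho})^{\Uparrow}$. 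Combined with the strict inequalities $\nu_{\beta_1}<\dots<\nu_{\beta_r}$, which pin down the order in which the entries must be separated within each component, this should force $k_j\ge m_j$ for all $j$ in that component, and hence everywhere.

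Once $k_i\ge m_i$ for all $i$, we have $\nu\in\lambda^{\rho}-\mathbb{N}S_{\lambda}$, and since $\nu$ is regular with $\nu_{\beta_1}<\dots<\nu_{\beta_r}$, the alternate characterization of $C_{\lambda,\mathrm{reg}}^{\mathrm{Lexi}}$ gives $\nu\in C_{\lambda,\mathrm{reg}}^{\mathrm{Lexi}}$, the desired contradiction. The main obstacle I anticipate is the careful bookkeeping in the middle step: one must track, component by component, how the collapsed entries of $(\lambda^{\rho})^{\Uparrow}$ get re-separated and verify that the strictly increasing condition on $\nu_{\beta_1},\dots,\nu_{\beta_r}$ really does rule out "partial" separations where some $k_j<m_j$ is compensated by a clash being resolved in an unexpected way. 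It may be cleanest to argue one atypical component at a time, and within a component to induct on the $\vee$'s from right to left (mirroring the structure of the trivial path), showing at each stage that the rightmost not-yet-separated atypical entry must be lowered by at least the full gap $m_j$ to avoid coinciding with a typical entry or with another atypical entry of the same parity.
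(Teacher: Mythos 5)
Your overall strategy is sound and aligns with the paper's: argue that if $\nu$ were regular it would already satisfy the alternate characterization of $C_{\lambda,\mathrm{reg}}^{\mathrm{Lexi}}$, so the only way to be in $\overline{C_{\lambda}^{\mathrm{Lexi}}}\setminus C_{\lambda,\mathrm{reg}}^{\mathrm{Lexi}}$ while remaining in $(\lambda^{\rho})^{\Uparrow}-\mathbb{N}S_{\lambda}$ is to have some $\nu_{\beta_j}>\lambda^\rho_{\beta_j}$, which must force a clash. But the heart of the lemma is precisely the step you flag as an anticipated obstacle, and you have not actually proved it: the assertion that ``the minimal amount needed to separate them is exactly $m_j$'' is stated, not argued, and your proposed component-by-component, right-to-left induction is a plan rather than a proof. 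As written, your proposal leaves the lemma unproved.

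The paper closes this gap with a short pigeonhole count, and it is worth recording because it is cleaner than a per-$\vee$ induction. Choose $j$ maximal with $\lambda^\rho_{\beta_j}<\nu_{\beta_j}\le(\lambda^\rho)^\Uparrow_{\beta_j}$; then $\nu_{\beta_i}\le\lambda^\rho_{\beta_i}$ for all $i>j$. Let $\beta_k$ be the last $\vee$ in the atypical component containing $\beta_j$, so $(\lambda^\rho)^\Uparrow_{\beta_j}=\lambda^\rho_{\beta_k}$. Every integer in $I=(\lambda^\rho_{\beta_j},\,(\lambda^\rho)^\Uparrow_{\beta_j}]$ is an entry of $\lambda^\rho$ (no $\wedge$ inside a component), and the $\vee$'s in $I$ are exactly $\lambda^\rho_{\beta_{j+1}},\dots,\lambda^\rho_{\beta_k}$, i.e.\ $k-j$ of them; the remaining integers of $I$ are typical entries. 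Now the strictly increasing chain $\nu_{\beta_j}<\nu_{\beta_{j+1}}<\dots<\nu_{\beta_k}$ consists of $k-j+1$ distinct integers, all lying in $I$ because $\nu_{\beta_j}>\lambda^\rho_{\beta_j}$ and $\nu_{\beta_i}\le\lambda^\rho_{\beta_i}\le\lambda^\rho_{\beta_k}$ for $j<i\le k$. Each $\nu_{\beta_i}$ occurs as both an $\varepsilon$-entry and a $\delta$-entry of $\nu$, so regularity would force all $k-j+1$ of them to land on integers of $I$ that are not typical entries of $\lambda^\rho$, of which there are only $k-j$. This contradiction is exactly the ``careful bookkeeping'' your sketch defers, and it is where the ordering hypothesis $\nu_{\beta_1}<\dots<\nu_{\beta_r}$ and the structure of atypical components are genuinely used. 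If you replace your middle step with this count, the rest of your reduction goes through.
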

\begin{proof}
Let $j$ be such that $\lambda_{\beta_{j}}^{\rho}<\nu_{\beta_{j}}\leq(\lambda^{\rho})_{\beta_{j}}^{\Uparrow}$
and $\nu_{\beta_{i}}\leq\lambda_{\beta_{i}}^{\rho}$ for all $i>j$.
By definition of $(\lambda^{\rho})^{\Uparrow}$, all the integers
between $\lambda_{\beta_{j}}^{\rho}+1$ and $((\lambda^{\rho})^{\Uparrow})_{\beta_{j}}$
are entries of $\lambda^{\rho}$. The typical entries of $\nu$ are
the same as of $\lambda^{\rho}$ and there are $r-j+1$ atypical entries
which are strictly greater than $\lambda_{\beta_{j}}^{\rho}$. This
implies that there must be equal entries of the same type, and hence
$\nu$ is not regular. \end{proof}
\begin{lem}
\label{lem:action by W}Let $\mathfrak{C}_{\lambda}=\left\{ w(\nu)\in\left(\lambda^{\rho}\right)^{\Uparrow}-\mathbb{N}S_{\lambda}\mid w\in W_{r},\ \nu\in\overline{C_{\lambda}^{\mathrm{Lexi}}}\right\} $
and 
\[
\mathfrak{D}_{\lambda}=\left\{ \nu\in(\lambda^{\rho})^{\Uparrow}-\mathbb{N}S_{\lambda}\ \mid\ \nu_{\beta_{i}}\neq\nu_{\beta_{j}}\ \text{for any }i\neq j\right\} .
\]
Then $\mathfrak{C}_{\lambda}=\mathfrak{D}_{\lambda}$ as multisets,
and hence elements of $((\lambda^{\rho})^{\Uparrow}-\mathbb{N}S_{\lambda})\setminus\mathfrak{C}_{\lambda}$
are not regular.\end{lem}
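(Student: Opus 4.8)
The plan is to prove the identity of multisets $\mathfrak{C}_\lambda = \mathfrak{D}_\lambda$ by showing mutual inclusion, where the delicate point is the multiplicity count rather than mere set equality. First I would establish $\mathfrak{C}_\lambda \subseteq \mathfrak{D}_\lambda$: if $\nu \in \overline{C_\lambda^{\mathrm{Lexi}}}$ then the entries $\nu_{\beta_1} < \cdots < \nu_{\beta_r}$ are strictly increasing, hence pairwise distinct, and applying any $w \in W_r$ merely permutes these $r$ values among the $\delta_{q_i}$-coordinates (note $W_r$ permutes $S_\lambda$), so distinctness is preserved; the requirement that the result lie in $(\lambda^\rho)^{\Uparrow}-\mathbb{N}S_\lambda$ is built into the definition of $\mathfrak{C}_\lambda$, so $w(\nu) \in \mathfrak{D}_\lambda$.

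For the reverse inclusion $\mathfrak{D}_\lambda \subseteq \mathfrak{C}_\lambda$, I would start with any $\nu \in \mathfrak{D}_\lambda$. Since the values $\nu_{\beta_1}, \ldots, \nu_{\beta_r}$ are pairwise distinct, there is a unique $w \in W_r$ (sorting permutation) so that $w^{-1}(\nu)$ has these values in increasing order along $\beta_1, \ldots, \beta_r$. The nontrivial point is that $w^{-1}(\nu)$ still lies in $(\lambda^\rho)^{\Uparrow} - \mathbb{N}S_\lambda$; this holds because $W_r$ only permutes the coordinates indexed by $S_\lambda$, leaving typical coordinates fixed, and because $((\lambda^\rho)^{\Uparrow})_{\beta_i}$ is constant on each atypical component — but here we need a bit more care, since $w^{-1}$ may move an atypical entry out of its original component's range. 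However, because the sorted sequence only rearranges the multiset of atypical values (all of which are already $\leq$ the corresponding $(\lambda^\rho)^{\Uparrow}$-entries in some order compatible with being in $(\lambda^\rho)^{\Uparrow}-\mathbb{N}S_\lambda$, using that the $\beta$-entries of $(\lambda^\rho)^{\Uparrow}$ are weakly increasing), the sorted arrangement remains $\preceq$-below, so $w^{-1}(\nu) \in \overline{C_\lambda^{\mathrm{Lexi}}}$ and $\nu = w(w^{-1}(\nu)) \in \mathfrak{C}_\lambda$.

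Finally, for the multiplicity claim — that $\mathfrak{C}_\lambda$ and $\mathfrak{D}_\lambda$ agree \emph{as multisets} — I would argue that each $\nu \in \mathfrak{D}_\lambda$ arises exactly once in the list $\{w(\eta) : w \in W_r, \eta \in \overline{C_\lambda^{\mathrm{Lexi}}}\}$. Suppose $w_1(\eta_1) = w_2(\eta_2) = \nu$ with $\eta_1, \eta_2 \in \overline{C_\lambda^{\mathrm{Lexi}}}$. Then $w_2^{-1}w_1(\eta_1) = \eta_2$, and since both $\eta_1, \eta_2$ have strictly increasing $\beta$-entries while $w_2^{-1}w_1 \in W_r$ permutes those entries, the permutation must be the unique sorting permutation of a strictly increasing sequence, i.e. the identity on the relevant coordinates; hence $w_1 = w_2$ and $\eta_1 = \eta_2$. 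This gives the bijection realizing the multiset equality. The last sentence of the lemma — that elements of $((\lambda^\rho)^{\Uparrow} - \mathbb{N}S_\lambda) \setminus \mathfrak{C}_\lambda$ are not regular — then follows: such an element $\nu$ has $\nu_{\beta_i} = \nu_{\beta_j}$ for some $i \neq j$ by the equality $\mathfrak{C}_\lambda = \mathfrak{D}_\lambda$, and two equal atypical entries of the same type (both $\delta$-entries) force a non-regular weight, exactly as in the proof of Lemma \ref{lem:add bar}.

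The main obstacle I anticipate is the reverse inclusion's well-definedness: verifying that the sorting permutation $w^{-1}$ keeps $w^{-1}(\nu)$ inside $(\lambda^\rho)^{\Uparrow} - \mathbb{N}S_\lambda$. This requires knowing that the $\beta$-entries of $(\lambda^\rho)^{\Uparrow}$ are themselves weakly increasing along $\beta_1, \ldots, \beta_r$ (which follows since $\lambda^\rho$ is strictly dominant and $(\lambda^\rho)^{\Uparrow}$ raises atypical entries to the component maximum while the ordering $q_1 < \cdots < q_r$ reflects left-to-right position of the $\vee$'s), together with the observation that subtracting a strictly-increasing sequence (the $\eta$-entries) from a weakly-increasing one and then sorting cannot increase any entry past its $(\lambda^\rho)^{\Uparrow}$-bound. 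Once that monotonicity bookkeeping is pinned down, the rest is a routine unique-sorting-permutation argument.
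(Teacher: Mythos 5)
Your proposal follows the same route as the paper's proof: the forward inclusion is immediate, the reverse inclusion is the sorting-permutation argument combined with the weak monotonicity of $((\lambda^\rho)^\Uparrow)_{\beta_i}$ in $i$, and the multiset (multiplicity-one) claim reduces to the uniqueness of the sorted representative in each $W_r$-orbit. The one place to tighten is your somewhat cryptic remark about ``subtracting a strictly-increasing sequence from a weakly-increasing one and sorting''; the clean form of the order-statistics estimate that makes the reverse inclusion work is $\nu_{\beta_{\sigma(i)}} \le \max\{\nu_{\beta_1},\ldots,\nu_{\beta_i}\} \le ((\lambda^\rho)^\Uparrow)_{\beta_i}$, exactly as the paper records.
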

\begin{proof}
Clearly we have $\mathfrak{C}_{\lambda}\subseteq\mathfrak{D}_{\lambda}$
as sets. Since there is a unique element in the $W_{r}$ orbit of
any $\nu\in\overline{C_{\lambda}^{\mathrm{Lexi}}}$ that satisfies
$\nu_{\beta_{1}}<\nu_{\beta_{2}}<\ldots<\nu_{\beta_{r}}$, the orbits
of distinct elements from $\overline{C_{\lambda}^{\mathrm{Lexi}}}$
do not intersect. Hence, we have an inclusion of multisets. For the
reverse inclusion, suppose that $\nu\in\mathfrak{D}_{\lambda}$. Take
$\sigma\in W_{r}$ such that $\sigma^{-1}(\nu)$ satisfies $\nu_{\beta_{\sigma(1)}}<\nu_{\beta_{\sigma(2)}}<\cdots<\nu_{\beta_{\sigma(r)}}$.
Since 
\[
\nu_{\beta_{\sigma(i)}}\leq\mathrm{max}\{\nu_{\beta_{1}},\ldots,\nu_{\beta_{i}}\}\leq(\lambda^{\rho})_{\beta_{i}}^{\Uparrow}
\]
we have that $\sigma^{-1}(\nu)\in\left(\lambda^{\rho}\right)^{\Uparrow}-\mathbb{N}S_{\lambda}$.
Hence $\sigma^{-1}(\nu)\in\overline{C_{\lambda}^{\mathrm{Lexi}}}$
and $\nu=\sigma\left(\sigma^{-1}(\nu)\right)\in\mathfrak{C}_{\lambda}$
. 
\end{proof}

\subsection{Proof of the main theorem}
\begin{proof}[Proof of Theorem \ref{thm:main theorem}]
By Lemma \ref{lem:Lexi sum}, we have that 
\[
e^{\rho}R\cdot\mbox{ch }L\left(\lambda\right)=\sum_{\nu\ \in\ C_{\lambda,\mathrm{reg}}^{\mathrm{Lexi}}}\bar{d}{}_{\lambda,\nu}\cdot\left(-1\right)^{\left|\lambda^{\rho}-\nu\right|_{S_{\lambda}}}\mathcal{F}_{W}\left(e^{\nu}\right)
\]
which by (\ref{eq:rewrite sum}) equals 
\[
(-1)^{|(\lambda^{\rho})^{\Uparrow}-(\lambda^{\rho})|_{S_{\lambda}}}\sum_{\nu\ \in\ C_{\lambda,\mathrm{reg}}^{\mathrm{Lexi}}}\frac{\bar{d}{}_{\lambda,\nu}}{c_{\lambda,\nu}}\left(-1\right)^{\left|(\lambda^{\rho})^{\Uparrow}-\nu\right|_{S_{\lambda}}}\mathcal{F}_{W}\left(\sum_{w\in W_{r}(\lambda,\nu)}e^{w(\nu)}\right).
\]
Then by Proposition \ref{prop:use pdc} the latter is equal to 
\[
(-1)^{|(\lambda^{\rho})^{\Uparrow}-(\lambda^{\rho})|_{S_{\lambda}}}\sum_{\nu\ \in\ C_{\lambda,\mathrm{reg}}^{\mathrm{Lexi}}}\frac{1}{t_{\lambda}}\left(-1\right)^{\left|(\lambda^{\rho})^{\Uparrow}-\nu\right|_{S_{\lambda}}}\mathcal{F}_{W}\left(\sum_{w\in W_{r}(\lambda,\nu)}e^{w(\nu)}\right)
\]
and so by Lemma \ref{lem:add bar} and Lemma \ref{lem:not regular}
we have that it is equal to

\begin{eqnarray*}
 &  & \frac{(-1)^{|(\lambda^{\rho})^{\Uparrow}-(\lambda^{\rho})|_{S_{\lambda}}}}{t_{\lambda}}\sum_{\nu\ \in\ \overline{C_{\lambda}^{\mathrm{Lexi}}}}\left(-1\right)^{\left|(\lambda^{\rho})^{\Uparrow}-\nu\right|_{S_{\lambda}}}\mathcal{F}_{W}\left(\sum_{w\in W_{r}(\lambda,\nu)}e^{w(\nu)}\right).
\end{eqnarray*}
Then Lemma \ref{lem:action by W} and Lemma \ref{lem:not regular},
it is equal to

\[
\frac{(-1)^{|(\lambda^{\rho})^{\Uparrow}-(\lambda^{\rho})|_{S_{\lambda}}}}{t_{\lambda}}\sum_{\nu\ \in\ (\lambda^{\rho})^{\Uparrow}-\mathbb{N}S_{\lambda}}\left(-1\right)^{\left|(\lambda^{\rho})^{\Uparrow}-\nu\right|_{S_{\lambda}}}\mathcal{F}_{W}\left(e^{\nu}\right)
\]
which can be rewritten as 
\[
\frac{(-1)^{|(\lambda^{\rho})^{\Uparrow}-(\lambda^{\rho})|_{S_{\lambda}}}}{t_{\lambda}}\mathcal{F}_{W}\left(\sum_{\nu\ \in\ (\lambda^{\rho})^{\Uparrow}-\mathbb{N}S_{\lambda}}\left(-1\right)^{\left|(\lambda^{\rho})^{\Uparrow}-\nu\right|_{S_{\lambda}}}e^{\nu}\right)
\]
\[
=\frac{(-1)^{|(\lambda^{\rho})^{\Uparrow}-(\lambda^{\rho})|_{S_{\lambda}}}}{t_{\lambda}}\mathcal{F}_{W}\left(\frac{e^{(\lambda^{\rho})^{\Uparrow}}}{\prod_{\beta\in S_{\lambda}}\left(1+e^{-\beta}\right)}\right).
\]
\end{proof}

M.C.: Dept. of Mathematics, University of Minnesota, mchmutov@umn.edu\\
 C.H.: Dept. of Mathematics, Weizmann Institute of Science, crystal.hoyt@weizmann.ac.il\\
 S.R.: Dept. of Mathematics, ORT Braude College, shifi@braude.ac.il 

\begin{thebibliography}{VHKT}
\bibitem[BL]{BL}I.N. Bernstein, D.A. Leites, \emph{A formula for
the characters of the irreducible finite-dimensional representations
of Lie superalgebras of series $\mathfrak{gl}$ and }$\mathfrak{sl}$,
C. R. Acad. Bulgare Sci. 33 (1980) 1049\textendash 1051.

\bibitem[B]{B}J. Brundan, \emph{Kazhdan-Lusztig polynomials and character
formulae for the Lie superalgebra $\mathfrak{gl}(m|n)$}, J. Amer.
Math. Soc. 16 (2003), no. 1, 185--231.

\bibitem[BS1]{BS1}J. Brundan, C. Stroppel, \emph{Highest weight categories
arising from Khovanov's diagram algebra I: Cellularity}, Mosc. Math.
J. 11 (2011), no. 4, 685--722.

\bibitem[BS2]{BS2}J. Brundan, C. Stroppel, \emph{Highest weight categories
arising from Khovanov's diagram algebra 2: Koszulity,} Transformation
Groups 15 (2010), no.1, 1--45.

\bibitem[BS4]{BS4}J. Brundan, C. Stroppel, \emph{Highest weight categories
arising from Khovanov's diagram algebra IV: the general linear supergroup,
}J. Eur. Math. Soc\emph{. 14 (2012), 373\textendash 419. }

\bibitem[CHR]{CHR} M. Chmutov, C. Hoyt, S. Reif, \emph{Kac-Wakimoto
character formula for the general linear Lie superalgebra}, Algebra
and Number Theory 9 (2015), 1419-1452.

\bibitem[CK1]{CK} Shun-Jen Cheng, Jae-Hoon Kwon, \emph{Kac-Wakimoto
character formula for ortho-symplectic Lie superalgebras}, arXiv:1406.6739.

\bibitem[CK2]{CK2}Shun-Jen Cheng, Jae-Hoon Kwon, \emph{Finite-Dimensional
Half-Integer Weight Modules over Queer Lie Superalgebras}, Commun.
Math. Phys, to appear, arXiv:1505.06602.

\bibitem[CWZ]{CWZ}S.-J. Cheng, W. Wang, R.B. Zhang, \emph{Super duality
and Kazhdan-Lusztig polynomials}, Trans. AMS 360 (2008), 5883--5924.

\bibitem[G]{G}M. Gorelik, \emph{Weyl denominator identity for finite-dimensional
Lie superalgebras}, Highlights in Lie algebraic methods. New York:
Birkhäuser/Springer; Progr. Math. 295, (2012) 167--188.

\bibitem[GK]{GK}M. Gorelik, V.G. Kac, \emph{Characters of (relatively)
integrable modules over affine Lie superlagebras}, Japanese Journal
of Mathematics 10 (2015), no.2, 135-235.

\bibitem[GS]{GS}C. Gruson, V. Serganova, \emph{Cohomology of generalized
supergrassmannians and character formulae for basic classical Lie
superalgebras}, Proc. Lond. Math. Soc. (3) 101 (2010), no. 3, 852--892.

\bibitem[HW]{HW}Thorsten Heidersdorf, Rainer Weissauer, \emph{Cohomological
Tensor Functors on Representations of the General Linear Supergroup},
arXiv:1406.0321.

\bibitem[K1]{K1}V.G. Kac, \emph{Lie superalgebras}, Adv. Math. 26
(1977) 8\textendash 96.

\bibitem[K2]{K2} V.G. Kac, \emph{Characters of typical representations
of classical Lie superalgebras}, Comm. Algebra 5 (1977) 889\textendash 897.

\bibitem[K3]{K3}V.G. Kac, \emph{Representations of classical Lie
superalgebras}, Lecture Notes in Math., vol. 676, (1978) 597-626.

\bibitem[KW1]{KW}V.G. Kac and M. Wakimoto, \emph{Integrable highest
weight modules over affine superalgebras and number theory}, Lie theory
and geometry, edited by J.-L. Brylinski et al., Progr. Math. 123,
Birkhäuser, Boston, MA, (1994) 415--456.

\bibitem[KW2]{KW2}V.G. Kac and M. Wakimoto, \emph{Representations
of affine superalgebras and mock theta functions}, Transformation
Groups (2014), Volume 19, Issue 2, pp 383-455.

\bibitem[M]{M}I. Musson, \emph{Lie superalgebras and enveloping algebras},
Graduate Studies in Mathematics, vol. 131, 2012.

\bibitem[MS]{MS}I. Musson, V. Serganova, \emph{Combinatorics of Character
Formulas for the Lie Superalgebra $\mathfrak{gl}(m|n)$}, Transform.
Groups 16 (2011), 555--578.

\bibitem[MV]{MV}E.M. Moens, J. Van der Jeugt, \emph{A characer formula
for atypical critical $\mathfrak{gl}\left(m|n\right)$ representations
labeled by composite partitions,} J. Phys. A 37 (2004), no. 50, 12019--12039.

\bibitem[S1]{S1} V. Serganova, \emph{Kazhdan-Lusztig polynomials
and character formula for the Lie superalgebra }$\mathfrak{gl}(m|n)$,
Selecta Math. (N.S.) 2 (1996), no. 4, 607--651.

\bibitem[S2]{S2}V. Serganova, \emph{Characters of irreducible representations
of simple Lie superalgebras}, Proceedings of the International Congress
of Mathematicians, vol. II, 1998, Berlin, Doc. Math., J. Deutsch.
Math.-Verein. (1998) 583--593.

\bibitem[SZ]{SZ}Y. Su, R.B. Zhang, \emph{Character and dimension
formulae for general linear superalgebra}, Advances in Mathematics
211 (2007) 1--33.

\bibitem[VHKT]{VHKT}J. Van der Jeugt, J.W.B. Hughes, R.C. King, J.
Thierry-Mieg, \emph{Character formulas for irreducible modules of
the Lie superalgebras $\mathfrak{sl}(m|n)$}, J. Math. Phys. 31 (1990)
2278\textendash 2304.

\end{thebibliography}
\end{document}